\newcommand{\ud}[0]{\,\mathrm{d}}
\newcommand{\abs}[1]{|#1|}
\newcommand{\Norm}[2]{\|#1\|_{#2}}
\newcommand{\ave}[1]{\langle #1\rangle}
\newcommand{\BMO}[0]{\operatorname{BMO}}
\newcommand{\CMO}[0]{\operatorname{CMO}}
\newcommand{\VMO}[0]{\operatorname{VMO}}
\newcommand{\supp}[0]{\operatorname{supp}}
\newcommand{\loc}[0]{\operatorname{loc}}
\newcommand{\R}{\mathbb{R}}
\newcommand{\C}{\mathbb{C}}
\newcommand{\N}{\mathbb{N}}
\newcommand{\eps}[0]{\varepsilon}
\swapnumbers \numberwithin{equation}{section}
\theoremstyle{plain}
\newtheorem{theorem}[equation]{Theorem}
\newtheorem{proposition}[equation]{Proposition}
\newtheorem{corollary}[equation]{Corollary}
\newtheorem{lemma}[equation]{Lemma}
\newtheorem{conjecture}[equation]{Conjecture}
\theoremstyle{definition}
\newtheorem{definition}[equation]{Definition}
\newtheorem{remark}[equation]{Remark}
 \def\@textbottom{\vskip \z@ \@plus 1pt}
 \let\@texttop\relax
\begin{document}

\title[Extrapolation of compactness]{Extrapolation of compactness on\\ weighted spaces}

\author[T. \ Hyt\"onen and S. Lappas]{Tuomas Hyt\"onen and Stefanos Lappas}
\address{Department of Mathematics and Statistics, P.O.Box~68 (Pietari Kalmin katu~5), FI-00014 University of Helsinki, Finland}
\email{tuomas.hytonen@helsinki.fi}
\email{stefanos.lappas@helsinki.fi}


\keywords{Weighted extrapolation, compact operator, singular integral, fractional integral, commutator, Muckenhoupt weight, Bochner--Riesz multiplier, pseudo-differential operator}
\subjclass[2020]{47B38 (Primary); 35S05, 42B20, 42B35, 46B70}



\maketitle


\begin{abstract}
Let $T$ be a linear operator that, for some $p_1\in(1,\infty)$, is bounded on $L^{p_1}(\tilde w)$ for all $\tilde w\in A_{p_1}(\R^d)$ and in addition compact on $L^{p_1}(w_1)$ for some $w_1\in A_{p_1}(\R^d)$. Then $T$ is bounded and compact on $L^p(w)$ for all $p\in(1,\infty)$ and all $w\in A_p(\R^d)$. This ``compact version'' of Rubio de Francia's celebrated weighted extrapolation theorem follows from a combination of results in the interpolation and extrapolation theory of weighted spaces on the one hand, and of compact operators on abstract spaces on the other hand. Moreover, generalizations of this extrapolation of compactness are obtained for operators that are bounded from one space to a different one (``off-diagonal estimates'') or only in a limited range of the $L^p$ scale. As applications, we easily recover several recent results on the weighted compactness of commutators of singular integral operators, fractional integrals and pseudo-differential operators, and obtain new results about the weighted compactness of commutators of Bochner--Riesz multipliers.
\end{abstract}

\section{Introduction}

By a {\em weight} we understand a locally integrable function $w\in L^1_{\loc}(\R^d)$ that is positive almost everywhere. As we will work in the weighted setting, we define the weighted Lebesgue spaces
\begin{equation*}
  L^p(w):=\Big\{f:\R^d\to\C\text{ measurable }\Big|\ \Norm{f}{L^p(w)}:=\Big(\int_{\R^d}\abs{f}^p w\Big)^{1/p}<\infty\Big\}
\end{equation*}
and we recall the definitions of $A_p(\R^d)$,
$A_{p,q}(\R^d)$, and $RH_r(\R^d)$ classes of weights first introduced by Muckenhoupt \cite{Muckenhoupt:Ap}, Muckenho\-upt--Wheeden \cite{MW:74}, and Gehring \cite{Gehring}, respectively:

\begin{definition}\label{def:Muchenhoupt weights}
A weight $w\in L_{\loc}^1(\R^d)$ is called a Muckenhoupt $A_p(\R^d)$ weight (or $w\in A_p(\R^d)$) if 
\begin{equation*}
\begin{split}
  &[w]_{A_p}:=\sup_Q\ave{w}_Q\ave{w^{-\frac{1}{p-1}}}_Q^{p-1}<\infty,\qquad 1<p<\infty, \\
  &[w]_{A_1}:=\sup_Q\ave{w}_Q\Norm{w^{-1}}{L^\infty(Q)}<\infty,\qquad p=1,
\end{split}
\end{equation*}
where the supremum is taken over all cubes $Q\subset\R^d$, and $\ave{w}_Q:=\abs{Q}^{-1}\int_Q w$. A weight $w$ is called an $A_{p,q}(\R^d)$ weight (or $w\in A_{p,q}(\R^d)$) if
\begin{equation*}
  [w]_{A_{p,q}}:=\sup_Q\ave{w^q}_Q^{1/q}\ave{w^{-p'}}_Q^{1/p'}<\infty,\qquad 1<p\leq q<\infty,
\end{equation*}
where $p':=p/(p-1)$ denotes the conjugate exponent.

We say that $w$ belongs to the reverse H\"older class $RH_r(\R^d)$ (or $w\in RH_r(\R^d)$) if
\begin{equation*}
\begin{split}
  &[w]_{RH_r}:=\sup_Q\ave{w^r}_Q^{1/r}\ave{w}_Q^{-1}<\infty,\qquad 1<r<\infty,  \\
  &[w]_{RH_{\infty}}:=\sup_{Q}\Norm{w}{L^\infty(Q)}\ave{w}_Q^{-1}<\infty,\qquad r=\infty.
\end{split}
\end{equation*}
\end{definition}
The role of these weights in Analysis is well recognised since the pioneering works \cite{CF, Gehring, HMW, Muckenhoupt:Ap}. In particular, 
the classes $A_p(\R^d)$ and $A_{p,q}(\R^d)$ were introduced to study the weighted norm inequalities for the {\em Hardy--Littlewood maximal function} and for {\em fractional integral operators}, respectively; see \cite{Muckenhoupt:Ap,MW:74}. On the other hand, the reverse H\"older classes $RH_r(\R^d)$ were introduced to study the {\em $L^p$-integrability} of the partial derivatives of a quasiconformal mapping; see \cite{Gehring}.

The following theorem of Rubio de Francia \cite{Rubio:factorAp} on the extrapolation of {\em boundedness} on weighted spaces is one of the most useful and powerful tools in the theory of weighted norm inequalities: (See also \cite{Garcia:extrapolation} for a constructive proof, \cite{DGPP} for a quantitative formulation, \cite{CUMP:book} for an extensive treatment of related matters, and \cite{Krylov:survey} for a recent survey on applications of this result to elliptic and parabolic equations.)

\begin{theorem}[\cite{Rubio:factorAp}]\label{thm:RdF}
Let $1\leq \lambda<p_1<\infty$, and let $T$ be a linear operator simultaneously defined and bounded on $L^{p_1}(\tilde w)$ for {\bf all} $\tilde w\in A_{p_1/\lambda}(\R^d)$, with the operator norm dominated by some increasing function of $[\tilde w]_{A_{p_1/\lambda}}$.
Then $T$ is also defined and bounded on $L^p(w)$ for all $p\in(\lambda,\infty)$ and all $w\in A_{p/\lambda}(\R^d)$.
\end{theorem}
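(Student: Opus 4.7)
The plan is to follow the classical Rubio de Francia argument, whose engine is the following iteration: given $r\in(1,\infty)$ and a weight $u$ with $M$ bounded on $L^r(u)$, for $h\geq 0$ set
\[
 R h := \sum_{k=0}^\infty \frac{M^k h}{\bigl(2\Norm{M}{L^r(u)\to L^r(u)}\bigr)^k}.
\]
Then $h\leq Rh$, $\Norm{Rh}{L^r(u)}\leq 2\Norm{h}{L^r(u)}$, and $M(Rh)\leq 2\Norm{M}{L^r(u)\to L^r(u)}\,Rh$ pointwise, so that $Rh\in A_1$ with controlled characteristic. This will be the one mechanism used to manufacture auxiliary $A_{p_1/\lambda}$-weights from arbitrary test functions.

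With this tool in hand, I would split into two cases according to the position of $p$ relative to $p_1$. Consider first the case $p>p_1$, so that $p/p_1>1$. By duality in $L^{p/p_1}(w)$,
\[
 \Norm{Tf}{L^p(w)}^{p_1} = \sup\Big\{\int\abs{Tf}^{p_1}\,h\,w : h\geq 0,\ \Norm{h}{L^{(p/p_1)'}(w)}\leq 1\Big\}.
\]
For each admissible $h$, I apply the Rubio de Francia algorithm in $L^{(p/p_1)'}(w)$ (where $M$ is bounded thanks to $w\in A_{p/\lambda}\subset A_{p/p_1}$, after checking that $p/\lambda\leq(p/p_1)'$, which holds since $\lambda<p_1$), obtaining $H\geq h$ that is an $A_1$-weight and whose $L^{(p/p_1)'}(w)$-norm is at most $2$. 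I then set $\tilde w := H^{1-p_1/\lambda}\cdot w^{p_1/\lambda\cdot \lambda/p}\cdots$, i.e., combine $H$ with an appropriate power of $w$ so that (via Jones factorization, or equivalently by direct computation using the $A_1$ property of $H$ and the $A_{p/\lambda}$ property of $w$) the product lands in $A_{p_1/\lambda}$ with characteristic bounded in terms of $[w]_{A_{p/\lambda}}$. Invoking the hypothesis on this $\tilde w$ and estimating the resulting integral against $\Norm{f}{L^p(w)}^{p_1}$ via H\"older closes this case.

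The complementary case $p\in(\lambda,p_1)$ (which also includes $p=p_1$ for weights other than the given one) is treated by the dual construction: the hypothesis on $T$ can be transferred to a dual statement involving the conjugate weight $\sigma := w^{-1/(p/\lambda-1)}$, which lies in $A_{(p/\lambda)'}$, and one runs the Rubio de Francia iteration against $\sigma$ rather than $w$. The algebra of exponents is arranged so that again the manufactured $\tilde w$ lies in $A_{p_1/\lambda}$, and the hypothesis applies.

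The main obstacle, and the heart of the proof, is the bookkeeping of the Muckenhoupt indices: one must verify that the combinations of $A_1$-weights produced by the algorithm with powers of the original weight $w$ (or its dual $\sigma$) actually give an $A_{p_1/\lambda}$-weight with quantitative control, and that the dual H\"older estimate pays off against $\Norm{f}{L^p(w)}$. The cleanest way to organise this is to invoke the Jones factorisation $A_q = A_1\cdot A_1^{1-q}$ to produce $\tilde w$ as a product of two $A_1$-pieces, each manufactured by a Rubio de Francia iteration in the correct space; everything else is then a matter of H\"older's inequality with the exponents $p_1/\lambda$ and its conjugate.
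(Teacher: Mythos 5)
The paper cites Theorem \ref{thm:RdF} as a known result (Rubio de Francia's extrapolation theorem) and does not reprove it, so there is no internal argument to compare against; your proposal is an independent attempt at the classical proof. It does, however, contain a concrete gap in the very first case $p>p_1$.

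You assert that $M$ is bounded on $L^{(p/p_1)'}(w)$ ``thanks to $w\in A_{p/\lambda}\subset A_{p/p_1}$, after checking that $p/\lambda\leq(p/p_1)'$, which holds since $\lambda<p_1$.'' Both claims are incorrect. Since $\lambda<p_1$ we have $p/\lambda>p/p_1$, so the inclusion of Muckenhoupt classes runs the other way: $A_{p/p_1}\subsetneq A_{p/\lambda}$. More to the point, $(p/p_1)'=p/(p-p_1)$, so the inequality $p/\lambda\leq(p/p_1)'$ is equivalent to $p\leq p_1+\lambda$; this is not implied by $\lambda<p_1$ and in fact fails for every sufficiently large $p$ (e.g.\ $\lambda=1$, $p_1=2$, $p=4$). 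For such $p$ the hypothesis $w\in A_{p/\lambda}$ does not place $w$ in $A_{(p/p_1)'}$, so the Hardy--Littlewood maximal operator $M$ is not bounded on $L^{(p/p_1)'}(w)$, the iteration $Rh=\sum_k M^k h/(2\|M\|)^k$ does not converge there, and the $A_1$-majorant $H$ (hence the auxiliary weight $\tilde w$) cannot be constructed. The case $p>p_1$ therefore does not close, and the dual case $p<p_1$ inherits the same defect.

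The standard cure, and the actual substance of the classical argument (Garc\'ia-Cuerva \cite{Garcia:extrapolation}, \cite{CUMP:book}), is to run the iteration not with $M$ itself but with a transported maximal operator of the form $Sh=w^{-a}\big(M(h^{b}w^{ab})\big)^{1/b}$, with exponents $a,b$ determined by $p$, $p_1$, and $\lambda$. These are chosen precisely so that boundedness of $S$ on $L^{(p/p_1)'}(w)$ is \emph{equivalent} to the available hypothesis $w\in A_{p/\lambda}$ (rather than to $w\in A_{(p/p_1)'}$, which you do not have), and so that functions $H$ with $SH\lesssim H$ combine with the appropriate power of $w$ to produce $\tilde w\in A_{p_1/\lambda}$. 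Until such a modified operator is introduced and its boundedness verified, your proof has a hole. The Jones factorisation you invoke at the end is a reasonable way to organise the weight algebra once the majorant exists, but it cannot supply the missing boundedness of the iteration operator.
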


In this paper, we provide a variant for extrapolation of {\em compactness}: (See Theorems \ref{thm:Off-dig.extrp.compact} and \ref{thm:limited range extrp.compact} in Section \ref{sec:extension} for extension to off-diagonal and limited range extrapolation of compactness.)

\begin{theorem}\label{thm:RdFcompact}
In the setting of Theorem \ref{thm:RdF}, suppose in addition that $T$ is compact on $L^{p_1}(w_1)$ for {\bf some} $w_1\in A_{p_1/\lambda}(\R^d)$. Then $T$ is compact on $L^p(w)$ for all $p\in(\lambda,\infty)$ and all $w\in A_{p/\lambda}(\R^d)$.
\end{theorem}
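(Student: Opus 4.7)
The plan is to combine Rubio de Francia extrapolation with an interpolation theorem for compact operators on a complex interpolation scale of weighted Lebesgue spaces. First, Theorem~\ref{thm:RdF} applied to the hypothesis already yields that $T$ is bounded on every $L^q(u)$ with $q\in(\lambda,\infty)$ and $u\in A_{q/\lambda}(\R^d)$; hence boundedness will be automatic on whichever auxiliary weighted spaces are used below, and the real task is to propagate \emph{compactness} from the single known space $L^{p_1}(w_1)$ to an arbitrary target $L^p(w)$.

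The transfer mechanism will be a Cwikel-type result from abstract interpolation theory: if a linear operator is bounded on a complex interpolation couple $(X_0,X_1)$ and compact on $X_0$, then it is compact on $[X_0,X_1]_\theta$ for every $\theta\in(0,1)$. For couples of weighted Lebesgue spaces, the Stein--Weiss theorem identifies the intermediate spaces through
\begin{equation*}
 [L^{p_0}(v_0),L^{p_2}(v_2)]_\theta=L^p(w),\qquad \tfrac{1}{p}=\tfrac{1-\theta}{p_0}+\tfrac{\theta}{p_2},\qquad w^{1/p}=v_0^{(1-\theta)/p_0}v_2^{\theta/p_2}.
\end{equation*}
Given a target pair $(p,w)$ with $w\in A_{p/\lambda}(\R^d)$, I would fix $(p_0,v_0)=(p_1,w_1)$ and look for $\theta\in(0,1)$ together with $(p_2,v_2)$ satisfying the relations above and $v_2\in A_{p_2/\lambda}(\R^d)$. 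The formal Stein--Weiss second endpoint is
\[
 v_2=w^{p_2/(\theta p)}\,w_1^{-(1-\theta)p_2/(\theta p_1)},
\]
which depends continuously on $\theta$ and reduces, in the limit $\theta\to 1^-$ with $p_2\to p$, to a small power perturbation of $w$. The openness and self-improvement properties of the Muckenhoupt classes (via the reverse Hölder inequality, and in a more structured way via Jones' factorization $A_{p/\lambda}=A_1\cdot A_1^{1-p/\lambda}$) should therefore furnish a valid choice, possibly after moving the distinguished pair $(p_1,w_1)$ to the opposite endpoint depending on whether $p>p_1$ or $p<p_1$.

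Once such a representation is in place, Theorem~\ref{thm:RdF} supplies boundedness at the second endpoint, the hypothesis provides compactness at the distinguished one, and the interpolation theorem for compact operators yields the conclusion for $L^p(w)$. I anticipate the main obstacle to be precisely the construction in the previous paragraph: verifying, for every admissible target $(p,w)$, that the Stein--Weiss second endpoint can be arranged to lie in the correct Muckenhoupt class. This is a quantitative matching of the extrapolation input to the interpolation output, and it is the point at which the particular membership $w_1\in A_{p_1/\lambda}$ (as opposed to mere compactness on some abstract Banach lattice) is needed.
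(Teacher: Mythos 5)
Your proposal follows essentially the same path as the paper: Rubio de Francia extrapolation supplies boundedness throughout the scale, Stein--Weiss realizes the target $L^p(w)$ as $[L^{p_1}(w_1),L^{p_2}(v_2)]_\theta$ with a constructed auxiliary endpoint lying in the correct Muckenhoupt class (controlled by reverse H\"older near the degenerate value of $\theta$), and interpolation of compactness transfers compactness from the distinguished space $L^{p_1}(w_1)$ to $L^p(w)$. One precision issue is worth flagging: the ``Cwikel-type result'' as you state it --- boundedness on a couple plus compactness at one endpoint implies compactness at all intermediate spaces --- is \emph{not} a theorem in full generality for Calder\'on's complex method; whether complex interpolation preserves compactness without supplementary hypotheses is a long-standing open problem. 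The paper therefore invokes the Cwikel--Kalton theorem, which holds under any of several side conditions (UMD, Banach lattice structure, or one endpoint being a reiterated interpolation space), and then checks that weighted $L^p$ spaces satisfy them; in your write-up you would need to record at least one such verification, e.g.\ the immediate fact that each $L^{p_j}(w_j)$ is a complexified Banach lattice on $\R^d$. A smaller remark: the case distinction $p\gtrless p_1$ you anticipate needing is in fact unnecessary, since the Stein--Weiss auxiliary exponent depends continuously on $\theta$ and tends to $p$ in the degenerate limit, so for a small perturbation it automatically remains in $(\lambda,\infty)$, and the reverse H\"older estimates for the weight are insensitive to the relative size of $p$ and $p_1$.
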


When $w_1\equiv 1$, Theorem \ref{thm:RdFcompact} says, roughly, that unweighted compactness bootstraps to weighted compactness, if weighted boundedness is already known. This is probably the most relevant case for most applications, like Corollary \ref{cor:ClopCruz} below (see also Sections \ref{sec:RSIO}--\ref{sec:Ap(varphi) weights}).

The paper is organized as follows: in Section \ref{sec:extension} we state Theorems \ref{thm:Off-dig.extrp.compact} and \ref{thm:limited range extrp.compact} which are extensions of Theorem \ref{thm:RdFcompact}.
The proofs of these results are presented in Section \ref{sec:main results} by collecting some previously known results and taking some auxiliary results for granted. Section \ref{sec:prop} is dedicated to the proofs of these auxiliary results (see Propositions \ref{prop:LpvInterm}, \ref{prop:offdiagonal} and \ref{prop:limitedrange}). In Sections \ref{sec:CZO}--\ref{comm. br. mult.} we provide several applications of our main results. In particular, we obtain previously known results for the commutators of singular integral operators, fractional integrals and a new result for the commutators of {\em Bochner--Riesz multipliers}. In Section \ref{sec:Ap(varphi) weights} we develop and apply yet another variant for extrapolation of compactness for a special class of weights related to the commutators of {\em pseudo-differential operators} with smooth symbols.

\subsection*{Notation} Throughout the paper, we denote by $C$ a positive constant which is independent of the main parameters but it may change at each occurrence, and we write $f\lesssim g$ if $f\leq Cg$. The term cube will always refer to a cube $Q\subset\R^d$ and $|Q|$ will denote its Lebesgue measure. Recall from Definition \ref{def:Muchenhoupt weights} that $\ave{w}_Q$ denotes $\abs{Q}^{-1}\int_Q w$, the average of $w$ over $Q$, and $p'$ is the conjugate exponent to $p$, that is $p':=p/(p-1)$.

\section{Extension to off-diagonal and limited range extrapolation of compactness}\label{sec:extension}

In this section, we state extrapolation of compactness theorems for operators that are bounded either from $L^p$ to $L^q$, for possibly different exponents $1<p\leq q<\infty$ or on $L^p$, for a limited range of the exponent $p$. For these type of operators the following versions of Rubio de Francia's extrapolation theorems are available: 

\begin{theorem}[\cite{HMS}, Harboure--Mac\'ias--Segovia]\label{thm:Off-dig.extrp.}
Let $T$ be a linear operator defined and bounded from $L^{p_1}(\tilde w^{p_1})$ to $L^{q_1}(\tilde w^{q_1})$ for {\bf some} $1<p_1\leq q_1<\infty$ and {\bf all} $\tilde w\in A_{p_1,q_1}(\R^d)$.
Then $T$ is also defined and bounded from $L^p(w^p)$ to $L^q(w^q)$ for all $1<p\leq q<\infty$ such that $\frac{1}{p}-\frac{1}{q}=\frac{1}{p_1}-\frac{1}{q_1}$ and all $w\in A_{p,q}(\R^d)$.
\end{theorem}

This applies to the study of the fractional integral operators, also known as the {\em Riesz potentials} (see Section \ref{comm. fr. int. op.}). A version of Theorem \ref{thm:Off-dig.extrp.}, with sharp constants is due to Lacey--Moen--Per\'ez--Torres \cite{Lacey2010}. A more general version, with sharp constants and including values of $0<q<p$, was given by Duoandikoetxea \cite{D2011}.

\begin{theorem}[\cite{AM}, Theorem 4.9 of Auscher--Martell]\label{thm:limited range extrp.}
Let $1\leq p_{-}<p_{+}<\infty$, and let $T$ be a linear operator simultaneously defined and bounded on $L^{p_1}(\tilde w)$ for {\bf some} $1\leq p_{-}\leq p_1\leq p_{+}<\infty$ and {\bf all} $\tilde w\in A_{p_1/p_{-}}(\R^d)\cap RH_{(p_{+}/p_1)'}(\R^d)$. Then $T$ is also defined and bounded on $L^p(w)$ for all $p\in(p_{-},p_{+})$ and all $w\in A_{p/p_{-}}(\R^d)\cap RH_{(p_{+}/p)'}(\R^d)$.
\end{theorem}

See also \cite{CUMP:book} where these extrapolation theorems and some others are discussed. In \cite{CUMP:book}, Theorems \ref{thm:Off-dig.extrp.} and \ref{thm:limited range extrp.} are stated in terms of non-negative, measurable pairs of functions $(f,g)$. The reason is that one does not need to work with specific operators since nothing about the operators themselves is used (like linearity or sublinearity) and they play no role. However, we work with linear operators since an abstract compactness result that we will use in order to prove Theorems \ref{thm:Off-dig.extrp.compact} and \ref{thm:limited range extrp.compact} below holds for linear operators (see Theorem \ref{thm:CwKa} of Cwikel--Kalton).

As we will see below, we extend Theorem \ref{thm:RdFcompact} about the extrapolation of compactness to the setting of Theorems \ref{thm:Off-dig.extrp.} and \ref{thm:limited range extrp.}:

\begin{theorem}\label{thm:Off-dig.extrp.compact}
In the setting of Theorem \ref{thm:Off-dig.extrp.}, suppose in addition that $T$ is compact from $L^{p_1}(w_1^{p_1})$ to $L^{q_1}(w_1^{q_1})$ for {\bf some} $w_1\in A_{p_1,q_1}(\R^d)$. Then $T$ is compact from $L^p(w^p)$ to $L^q(w^q)$ for all $1<p\leq q<\infty$ such that
$\frac{1}{p}-\frac{1}{q}=\frac{1}{p_1}-\frac{1}{q_1}$ and all $w\in A_{p,q}(\R^d)$.
\end{theorem}

\begin{theorem}\label{thm:limited range extrp.compact}
In the setting of Theorem \ref{thm:limited range extrp.}, suppose in addition that $T$ is compact on $L^{p_1}(w_1)$ for {\bf some} $w_1\in A_{p_1/p_{-}}(\R^d)\cap RH_{(p_{+}/p_1)'}(\R^d)$. Then $T$ is compact on $L^{p}(w)$ for all $p\in(p_{-},p_{+})$ and all $w\in A_{p/p_{-}}(\R^d)\cap RH_{(p_{+}/p)'}(\R^d)$.
\end{theorem}

\begin{remark}
Theorems \ref{thm:limited range extrp.} and \ref{thm:limited range extrp.compact} remain true in the case $p_{+}=\infty$, provided that $p_1<\infty$ in Theorem \ref{thm:limited range extrp.compact}. In this case they reduce to Theorems \ref{thm:RdF} and \ref{thm:RdFcompact} and thus the reverse H\"older conditions on $\tilde w, w, w_1$ are vacuous.
\end{remark}

\section{Auxiliary results; proofs of Theorems \ref{thm:RdFcompact}, \ref{thm:Off-dig.extrp.compact} and \ref{thm:limited range extrp.compact} }\label{sec:main results}

We collect the results from which the proofs of Theorems \ref{thm:RdFcompact}, \ref{thm:Off-dig.extrp.compact} and \ref{thm:limited range extrp.compact} follow. Our main abstract tool is the following theorem of Cwikel--Kalton \cite{CwKa}:

\begin{theorem}[\cite{CwKa}]\label{thm:CwKa}
Let $(X_0,X_1)$ and $(Y_0,Y_1)$ be Banach couples and let $T$ be a linear operator such that $T:X_0+X_1\to Y_0+Y_1$ and $T:X_j\to Y_j$ boundedly for $j=0,1$.
Suppose moreover that $T:X_1\to Y_1$ is compact.
Let $[\ ,\ ]_\theta$ be the complex interpolation functor of Calder\'on.
Then also $T:[X_0,X_1]_\theta\to[Y_0,Y_1]_\theta$ is compact for $\theta\in(0,1)$ under {\bf any} of the following four side conditions:
\
\begin{enumerate}
  \item\label{it:UMD} $X_1$ has the UMD (unconditional martingale differences) property,
  \item\label{it:Xinterm} $X_1$ is reflexive, and $X_1=[X_0,E]_\alpha$ for some Banach space $E$ and $\alpha\in(0,1)$,
  \item\label{it:Yinterm} $Y_1=[Y_0,F]_\beta$ for some Banach space $F$ and $\beta\in(0,1)$,
  \item\label{it:lattice} $X_0$ and $X_1$ are both complexified Banach lattices of measurable functions on a common measure space.
\end{enumerate}
\end{theorem}

(We have swapped the roles of the indices $0$ and $1$ in comparison to \cite{CwKa}. For the UMD property, see Chapter 4 of \cite{HNVW1}.)
Interestingly, the question whether Theorem \ref{thm:CwKa} would remain valid {\em without any} side conditions whatsoever seems to remain open; see \cite{CwRo} for a relatively recent discussion. This is not a major concern for the present needs, as we will only use Theorem \ref{thm:CwKa} in the following special settings: (Indeed, for these present needs in weighted $L^p$ spaces, we could have replaced the use of Theorem 3.1 by a much older result of \cite[Sec. 10.4]{C}, which gives the same conclusion under yet another side condition about the existence of certain finite dimensional approximate identities in the spaces under investigation. This was kindly pointed to us by Prof. Cwikel.)

\begin{proposition}\label{prop:LpvInterm}
Let $\lambda\in[1,\infty)$, let $q,q_1\in (\lambda,\infty)$ and $v\in A_{q/\lambda}(\R^d)$, $v_1\in A_{q_1/\lambda}(\R^d)$. Then
\begin{equation*}
  L^q(v)=[L^{q_0}(v_0),L^{q_1}(v_1)]_\gamma
\end{equation*}
for some $q_0\in(\lambda,\infty)$, $v_0\in A_{q_0/\lambda}(\R^d)$, and $\gamma\in(0,1)$.
\end{proposition}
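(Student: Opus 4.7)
The plan is to realise $L^q(v)$ through the Calderón--Stein--Weiss identity
\begin{equation*}
  [L^{q_0}(v_0),L^{q_1}(v_1)]_\gamma=L^q(v),
\end{equation*}
which holds for \emph{arbitrary} positive weights and exponents satisfying
\begin{equation*}
  \frac{1}{q}=\frac{1-\gamma}{q_0}+\frac{\gamma}{q_1},\qquad v^{1/q}=v_0^{(1-\gamma)/q_0}v_1^{\gamma/q_1}.
\end{equation*}
This is the standard formula for complex interpolation of weighted Lebesgue spaces as Banach lattices, and requires no Muckenhoupt hypothesis. I would treat $\gamma\in(0,1)$ as a free parameter, let $q_0$ be determined by the first relation, and set
\begin{equation*}
  v_0:=v^{q_0/(q(1-\gamma))}\cdot v_1^{-\gamma q_0/(q_1(1-\gamma))}
\end{equation*}
to make the second relation hold. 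The problem then reduces to choosing $\gamma$ so that $v_0\in A_{q_0/\lambda}(\R^d)$.

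My strategy is to take $\gamma>0$ small. Then $q_0$ is close to $q$, $s:=q_0/(q(1-\gamma))$ is close to $1$, and $t:=\gamma q_0/(q_1(1-\gamma))$ is close to $0$, so $v_0=v^s v_1^{-t}$ is a mild multiplicative perturbation of $v$. Two classical tools keep this perturbation inside a Muckenhoupt class: the \emph{openness} of $A_p$-classes (i.e., $v\in A_{q/\lambda}$ implies $v\in A_{(q/\lambda)-\eta}$ for some $\eta>0$), which gives the slack needed when $p_0:=q_0/\lambda$ dips slightly below $p:=q/\lambda$; and Gehring's \emph{reverse Hölder inequality}, which yields an $L^{1+\eta}_{\loc}$-improvement with averaged control on every cube for each of the four weights $v$, $v^{-\lambda/(q-\lambda)}$, $v_1$, $v_1^{-\lambda/(q_1-\lambda)}$.

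The verification itself is then a Hölder decomposition
\begin{equation*}
  \ave{v^s v_1^{-t}}_Q\leq\ave{v^{s\sigma}}_Q^{1/\sigma}\ave{v_1^{-t\tau}}_Q^{1/\tau}
\end{equation*}
on each cube $Q$, with conjugates $\sigma,\tau$ close to $1$ chosen so that $s\sigma$ lies in the reverse Hölder range of $v$ and the $v_1$-factor is handled by Jensen's inequality (or the reverse Hölder bound for $v_1^{-\lambda/(q_1-\lambda)}$). A mirror estimate applies to $\ave{v_0^{-\lambda/(q_0-\lambda)}}_Q$ via the reverse Hölder bounds for $v^{-\lambda/(q-\lambda)}$ and $v_1$. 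Multiplying the two averages and invoking the $A_{q/\lambda}$- and $A_{q_1/\lambda}$-conditions of $v$ and $v_1$ yields
\begin{equation*}
  [v_0]_{A_{q_0/\lambda}}\lesssim[v]_{A_{q/\lambda}}^{a(\gamma)}\cdot[v_1]_{A_{q_1/\lambda}}^{b(\gamma)}<\infty.
\end{equation*}
The main obstacle is bookkeeping: one must choose $\sigma,\tau$ (and confirm that $p_0$ lies inside the openness range of $v$) so that all four reverse Hölder exponents simultaneously fit inside the self-improvement gaps provided by Gehring's lemma, which is secured by taking $\gamma$ small enough.
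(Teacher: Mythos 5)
Your proposal is correct and follows essentially the same route as the paper: realise $L^q(v)$ via the Stein--Weiss interpolation identity, write $v_0=v^{s}v_1^{-t}$ with $s\to 1$ and $t\to 0$ as $\gamma\to 0$, and verify $v_0\in A_{q_0/\lambda}(\R^d)$ by applying H\"older on cube averages followed by the reverse H\"older inequality once $\gamma$ is small enough. The only cosmetic difference is in the bookkeeping: the paper picks the H\"older conjugates so that all four inner exponents tend simultaneously to $1$ and applies reverse H\"older to each of $v$, $v^{-1/(p-1)}$, $v_1$, $v_1^{-1/(p_1-1)}$, whereas you propose Jensen's inequality for the $v_1$-factors whose exponents tend to $0$ (equally valid), and your appeal to $A_p$-openness is superfluous since the $A_{q_0/\lambda}$ condition for $v_0$ is checked directly.
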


\begin{proposition}\label{prop:offdiagonal}
Suppose that $1<p\leq q<\infty$, $1<p_1\leq q_1<\infty$ and $v\in A_{p,q}(\R^d)$, $v_1\in A_{p_1,q_1}(\R^d)$. Then
\begin{equation*}
  [L^{p_0}({v_0}^{p_0}),L^{p_1}({v_1}^{p_1})]_\gamma=L^p(v^p)\,\,\,\text{and}\,\,\,[L^{q_0}({v_0}^{q_0}),L^{q_1}({v_1}^{q_1})]_\gamma=L^q(v^q)
\end{equation*}
for some $1<p_0\leq q_0<\infty$, $v_0\in A_{p_0,q_0}(\R^d)$, and $\gamma\in(0,1)$. Moreover, if $\frac{1}{p}-\frac{1}{q}=\frac{1}{p_1}-\frac{1}{q_1}$ we can choose $p_0,q_0$ in such a way that $\frac{1}{p}-\frac{1}{q}=\frac{1}{p_0}-\frac{1}{q_0}$.
\end{proposition}

\begin{proposition}\label{prop:limitedrange}
Suppose that $1\leq p_{-}<p_{+}<\infty$, $q_1\in[p_{-},p_{+}]$, $q\in(p_{-},p_{+})$ and
\begin{equation*}
v\in A_{q/p_{-}}(\R^d)\cap RH_{(p_{+}/q)'}(\R^d),\qquad v_1\in A_{q_1/p_{-}}(\R^d)\cap RH_{(p_{+}/q_1)'}(\R^d). 
\end{equation*}
Then
\begin{equation*}
  [L^{q_0}(v_0),L^{q_1}(v_1)]_{\gamma}=L^q(v)
\end{equation*}
for some $q_0\in(p_{-},p_{+})$, $v_0\in A_{q_0/p_{-}}(\R^d)\cap RH_{(p_{+}/q_0)'}(\R^d)$, and $\gamma\in(0,1)$.
\end{proposition}

We postpone the proofs of Propositions \ref{prop:LpvInterm}, \ref{prop:offdiagonal} and \ref{prop:limitedrange} to the following section. The verifications of these propositions are the only components of the proofs of Theorems \ref{thm:RdFcompact}, \ref{thm:Off-dig.extrp.compact} and \ref{thm:limited range extrp.compact} that require actual computations, rather than just a soft application of known results. 

\begin{lemma}\label{lem:lemOk}
If $p_j\in[1,\infty)$ and $w_j$ are weights, then the spaces $X_j=L^{p_j}(w_j)$ satisfy the condition \eqref{it:lattice} of Theorem \ref{thm:CwKa}.
\end{lemma}

\begin{proof}
It is immediate that both $X_j=L^{p_j}(w_j)$, $j=0,1$, are complexified Banach lattices of measurable functions on the common measure space $\R^d$.
\end{proof}

\begin{remark}
If $p_j,q_j\in(1,\infty)$ and $u_j,v_j$ are weights,
then the condition \eqref{it:UMD} of Theorem \ref{thm:CwKa} is also satisfied  by the spaces $X_j=L^{p_j}(u_j)$ and $Y_j=L^{q_j}(v_j)$. Indeed, it is well known (see e.g. Proposition 4.2.15 in \cite{HNVW1}) that all $L^p(\mu)$ spaces with $p\in(1,\infty)$ have the UMD property for any measure $\mu$, so in particular for the weighted Lebesgue measure.  Moreover, in all our concrete applications where the weights belong to appropriate Muckenhoupt classes we could also verify conditions \eqref{it:Xinterm} and \eqref{it:Yinterm} but we do not give the details here. For example, if $u_j\in A_{p_j/\lambda}(\R^d)$ then by Proposition \ref{prop:LpvInterm} we could deduce these conditions. Similar considerations could be done in the settings of Propositions \ref{prop:offdiagonal} and \ref{prop:limitedrange}.

For applications of Theorem \ref{thm:CwKa} to these concrete spaces, this is of course more than sufficient. We would only need one of the four side conditions, but in fact we have them all. Indeed, as already mentioned, we could have also verified the much older condition from Section 10.4 in \cite{C} instead of those of Theorem \ref{thm:CwKa}.  
\end{remark}

We can now give the proof of our main results:

\begin{proof}[Proof of Theorem \ref{thm:RdFcompact}]
Recall that the assumptions, and hence the conclusions, of Theorem \ref{thm:RdF} are in force. In particular, $T$ is a bounded linear operator on $L^p(w)$ for all $p\in(\lambda,\infty)$ and all $w\in A_{p/\lambda}(\R^d)$. In addition, it is assumed that $T$ is a compact operator on $L^{p_1}(w_1)$ for some $p_1\in(\lambda,\infty)$ and some $w_1\in A_{p_1/\lambda}(\R^d)$. We need to prove that $T$ is actually compact on $L^p(w)$ for all $p\in(\lambda,\infty)$ and all $w\in A_{p/\lambda}(\R^d)$. Now, fix some $p\in(\lambda,\infty)$ and $w\in A_{p/\lambda}(\R^d)$. By Proposition \ref{prop:LpvInterm}, we have
\begin{equation*}
  L^p(w)=[L^{p_0}(w_0),L^{p_1}(w_1)]_\theta
\end{equation*}
for some $p_0\in(\lambda,\infty)$, some $w_0\in A_{p_0/\lambda}(\R^d)$, and some $\theta\in(0,1)$. Writing $X_j=Y_j=L^{p_j}(w_j)$, we know that $T:X_0+X_1\to Y_0+Y_1$, that $T:X_j\to Y_j$ is bounded (since $T$ is bounded on all $L^q(w)$ with $q\in(\lambda,\infty)$ and $w\in A_{q/\lambda}(\R^d)$ by Theorem \ref{thm:RdF}), and that $T:X_1\to Y_1$ is compact (since this was assumed). By Lemma \ref{lem:lemOk}, the last condition \eqref{it:lattice} of Theorem \ref{thm:CwKa} is also satisfied by these spaces $X_j=L^{p_j}(w_j)$. By Theorem \ref{thm:CwKa}, it follows that $T$ is also compact on $[X_0,X_1]_\theta=[Y_0,Y_1]_\theta=L^p(w)$.
\end{proof}

\begin{proof}[Proof of Theorem \ref{thm:Off-dig.extrp.compact}]
Recall that the assumptions, and hence the conclusions, of Theorem \ref{thm:Off-dig.extrp.} are in force. In particular, $T:L^p(w^p)\to L^q(w^q)$ is a bounded linear operator for all $1<p\leq q<\infty$ such that $\frac{1}{p}-\frac{1}{q}=\frac{1}{p_1}-\frac{1}{q_1}$ and all $w\in A_{p,q}(\R^d)$. In addition, it is assumed that $T:L^{p_1}(w_1^{p_1})\to L^{q_1}(w_1^{q_1})$ is a compact operator for some $1<p_1\leq q_1<\infty$ and some $w_1\in A_{p_1,q_1}(\R^d)$. We need to prove that $T:L^p(w^p)\to L^q(w^q)$ is actually compact for all $1<p\leq q<\infty$ such that $\frac{1}{p}-\frac{1}{q}=\frac{1}{p_1}-\frac{1}{q_1}$ and all $w\in A_{p,q}(\R^d)$. Now, fix some $1<p\leq q<\infty$ and $w\in A_{p,q}(\R^d)$. By Proposition \ref{prop:offdiagonal}, we have
\begin{equation*}
  L^p(w^p)=[L^{p_0}({w_0}^{p_0}),L^{p_1}({w_1}^{p_1})]_\theta\,\,\,\text{and}\,\,\,L^q(w^q)=[L^{q_0}({w_0}^{q_0}),L^{q_1}({w_1}^{q_1})]_\theta
\end{equation*}
for some $1<p_0\leq q_0<\infty$, some $w_0\in A_{p_0,q_0}(\R^d)$, some $\theta\in(0,1)$ and $\frac{1}{p}-\frac{1}{q}=\frac{1}{p_0}-\frac{1}{q_0}$. Writing $X_j=L^{p_j}(w_j^{p_j})$ and $Y_j=L^{q_j}(w_j^{q_j})$, we know that $T:X_0+X_1\to Y_0+Y_1$ and $T:X_j\to Y_j$ is bounded ($T:L^{\tilde p}(w^{\tilde p})\to L^{\tilde q}(w^{\tilde q})$ is a bounded linear operator for all $1<\tilde p\leq\tilde q<\infty$ such that $\frac{1}{\tilde p}-\frac{1}{\tilde q}=\frac{1}{\tilde p_1}-\frac{1}{\tilde q_1}$ and all $w\in A_{\tilde p,\tilde q}(\R^d)$ by Theorem \ref{thm:Off-dig.extrp.}), and that $T:X_1\to Y_1$ is compact (since this was assumed). By Lemma \ref{lem:lemOk}, the last condition \eqref{it:lattice} of Theorem \ref{thm:CwKa} is also satisfied by these spaces $X_j=L^{p_j}(w_j^{p_j})$.
By Theorem \ref{thm:CwKa}, it follows that $T:L^p(w^p)=[X_0,X_1]_\theta\to L^q(w^q)=[Y_0,Y_1]_\theta$ is also compact.
\end{proof}

\begin{proof}[Proof of Theorem \ref{thm:limited range extrp.compact}]
Recall that the assumptions, and hence the conclusions, of Theorem \ref{thm:limited range extrp.} are in force. In particular, $T$ is a bounded linear operator on $L^p(w)$ for all $p\in(p_{-},p_{+})$ and all $w\in A_{p/p_{-}}(\R^d)\cap RH_{(p_{+}/p)'}(\R^d)$. In addition, it is assumed that $T$ is a compact operator on $L^{p_1}(w_1)$ for some $p_1\in[p_{-},p_{+}]$ and some $w_1\in A_{p_1/p_{-}}(\R^d)\cap RH_{(p_{+}/p_1)'}(\R^d)$. We need to prove that $T$ is actually compact on $L^p(w)$ for all $p\in(p_{-},p_{+})$ and all $w\in A_{p/p_{-}}(\R^d)\cap RH_{(p_{+}/p)'}(\R^d)$. Now, fix some $p\in(p_{-},p_{+})$ and $w\in A_{p/p_{-}}(\R^d)\cap RH_{(p_{+}/p)'}(\R^d)$. By Proposition \ref{prop:limitedrange}, we have
\begin{equation*}
  L^p(w)=[L^{p_0}(w_0),L^{p_1}(w_1)]_\theta
\end{equation*}
for some $p_0\in(p_{-},p_{+})$, some $w_0\in A_{p_0/p_{-}}(\R^d)\cap RH_{(p_{+}/p_0)'}(\R^d)$ and some $\theta\in(0,1)$. Writing $X_j=Y_j=L^{p_j}(w_j)$, we know that $T:X_0+X_1\to Y_0+Y_1$, that $T:X_j\to Y_j$ is bounded (since $T$ is bounded on all $L^q(w)$ with $q\in(p_{-},p_{+})\cup\{p_1\}$ and $w\in A_{q/p_{-}}(\R^d)\cap RH_{(p_{+}/q)'}(\R^d)$ by the assumptions and the conclusion of Theorem \ref{thm:limited range extrp.}), and also that $T:X_1\to Y_1$ is compact (since this was assumed). By Lemma \ref{lem:lemOk}, the last condition \eqref{it:lattice} of Theorem \ref{thm:CwKa} is also satisfied by these spaces $X_j=L^{p_j}(w_j)$. By Theorem \ref{thm:CwKa}, it follows that $T$ is also compact on $[X_0,X_1]_\theta=[Y_0,Y_1]_\theta=L^p(w)$.
\end{proof}

\section{The Proofs of Propositions \ref{prop:LpvInterm}, \ref{prop:offdiagonal} and \ref{prop:limitedrange}}\label{sec:prop}

To complete the proofs of Theorems \ref{thm:RdFcompact}, \ref{thm:Off-dig.extrp.compact} and \ref{thm:limited range extrp.compact}, it remains to verify Propositions \ref{prop:LpvInterm}, \ref{prop:offdiagonal} and \ref{prop:limitedrange}. We quote two more classical results:

\begin{proposition}[\cite{RDF1985, Gehring, JN1991}]\label{prop:weights} The following statements hold:

\begin{enumerate}
\item \label{weights prop. 1} \ (Theorem 1.14 in \cite{RDF1985}) If $1<p<\infty$, we have $w\in A_p(\R^d)$ if and only if $w^{1-p'}\in
A_{p'}(\R^d)$.
\item \label{weights prop. 2} \ (Theorem 2.6 in \cite{RDF1985}) If $w\in A_p(\R^d)$, $1<p<\infty$, then there exists $1<q<p$ such
that $w\in A_q(\R^d)$.
\item \label{weights prop. 3} \ (Lemma 3 in \cite{Gehring}) If $w\in RH_q(\R^d)$, $1<q<\infty$, then there exists $q<p<\infty$ such
that $w\in RH_p(\R^d)$.
\item \label{weights prop. 4} \ If $w\in A_{p,q}(\R^d)$, for $1<p\leq q<\infty$, then $w^q\in A_{1+q/p'}(\R^d)$ and $w^{-p'}\in A_{1+p'/q}(\R^d)$, where $\frac{1}{p}+\frac{1}{p'}=1$.
\item \label{weights prop. 5} \ (Statement (P6) in \cite{JN1991}) If $1<q,s<\infty$, then $w\in A_q(\R^d) \cap RH_s(\R^d)$ if and only if $w^{s}\in A_{s\,(q-1)+1}(\R^d)$.
\end{enumerate}
\end{proposition}

\begin{proof}
We only prove property \eqref{weights prop. 4}. Notice that $w\in A_{p,q}(\R^d)\Longleftrightarrow w^q\in A_r(\R^d)$, with $[w]_{A_{p,q}}=[w^q]_{A_r}$, where 
\begin{equation*}
  r:=1+q/p'.
\end{equation*}
The proof of $w^{-p'}\in A_{1+p'/q}(\R^d)$ follows in a similar fashion.
\end{proof}

\begin{theorem}[\cite{BL,SW:58}]\label{thm:SW}
If $q_0,q_1\in[1,\infty)$ and $w_0,w_1$ are two weights, then for all $\theta\in(0,1)$ we have
\begin{equation*}
  [L^{q_0}(w_0),L^{q_1}(w_1)]_\theta=L^q(w),
\end{equation*}
where
\begin{equation}\label{eq:convexity}
  \frac{1}{q}=\frac{1-\theta}{q_0}+\frac{\theta}{q_1}\qquad\text{and}\qquad
  w^{\frac{1}{q}}=w_0^{\frac{1-\theta}{q_0}}w_1^{\frac{\theta}{q_1}}.
\end{equation}
\end{theorem}

As stated, this can be found in Theorem 5.5.3 of \cite{BL}, but it is essentially a reformulation of an old theorem of Stein--Weiss \cite{SW:58} that predates the general interpolation theory.
In order to connect Theorem \ref{thm:SW} with $A_p(\R^d)$,  $A_{p,q}(\R^d)$ and $A_{q/p_{-}}(\R^d)\cap RH_{(p_{+}/q)'}(\R^d)$ weights, we need:

\begin{lemma}\label{lem:main}
Let $\lambda\in[1,\infty)$, $q_1,q\in(\lambda,\infty)$, $w_1\in A_{q_1/\lambda}(\R^d)$, $w\in A_{q/\lambda}(\R^d)$. Then there exist $q_0\in(\lambda,\infty)$, $w_0\in A_{q_0/\lambda}(\R^d)$, and $\theta\in(0,1)$ such that \eqref{eq:convexity} holds.
\end{lemma}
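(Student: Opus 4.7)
The plan is to reparameterize Lemma~\ref{lem:main} by a single small parameter $s > 0$, reducing it to showing that the explicit weight $w_0 := w^{1+s} w_1^{-s}$ lies in $A_{q_0/\lambda}$ for an explicit exponent $q_0$, and then verifying this via H\"older's inequality combined with the open-ended (Gehring reverse H\"older) property of Muckenhoupt weights.

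First, solving \eqref{eq:convexity} for $w_0$ and setting $s := \theta q_0 /((1-\theta) q_1)$, a direct calculation shows that \eqref{eq:convexity} holds identically when
\begin{equation*}
  w_0 = w^{1+s} w_1^{-s}, \qquad q_0 = q + s(q - q_1), \qquad \theta = \frac{s q_1}{q(1+s)}.
\end{equation*}
For all sufficiently small $s > 0$ one has $\theta \in (0,1)$ and $q_0 \in (\lambda,\infty)$ (any $s > 0$ if $q_1 \leq q$; $0 < s < (q-\lambda)/(q_1 - q)$ if $q_1 > q$), so the whole task reduces to verifying $w_0 \in A_{q_0/\lambda}$.

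Write $p_j := q_j/\lambda$ and introduce the dual weights $\sigma := w^{-1/(p-1)}$ and $\sigma_1 := w_1^{-1/(p_1-1)}$. The algebraic identity $p_0 - 1 = (p-1)(1+s) - (p_1-1)s$ produces the symmetric dual form
\begin{equation*}
  w_0^{-1/(p_0-1)} = \sigma^{1+\eta} \sigma_1^{-\eta}, \qquad \eta := \frac{(p_1-1) s}{p_0 - 1},
\end{equation*}
so that $w_0$ and its dual both belong to the same kind of convex-product family, with exponents tending to $(1,0)$ as $s \to 0^+$. For each of $\langle w_0 \rangle_Q$ and $\langle w_0^{-1/(p_0-1)} \rangle_Q$, I apply H\"older's inequality with conjugate exponents tuned so that the \emph{negative-power} factor collapses exactly onto $\langle \sigma_1 \rangle_Q$ or $\langle w_1 \rangle_Q$ respectively; the remaining positive-power factor is a fractional power of $w$ (resp.\ $\sigma$) slightly above $1$, which is controlled by $\langle w \rangle_Q^{1+s}$ (resp.\ $\langle \sigma \rangle_Q^{1+\eta}$) via Gehring's reverse H\"older inequality. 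The latter is available because $w \in A_p$ and $\sigma \in A_{p'}$, and the smallness of $s$ is chosen so that the powers appearing in these applications stay below the reverse H\"older thresholds of $w$ and $\sigma$. Using the numerical identities $(1+\eta)(p_0-1) = (1+s)(p-1)$ and $\eta(p_0-1)/(p_1-1) = s$ (both immediate from the definitions), the two bounds combine cleanly into
\begin{equation*}
  [w_0]_{A_{p_0}} \leq C\,[w]_{A_p}^{1+s}\,[w_1]_{A_{p_1}}^{s} < \infty.
\end{equation*}

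Conceptually, the content of the lemma is that the Muckenhoupt class is ``log-convex'' and that the openness of $A_p$ provides just enough slack to extend this convexity one step in the direction opposite to $(q_1, w_1)$. The only real obstacle is bookkeeping: coordinating the four H\"older exponents and the two reverse H\"older thresholds so that the algebra collapses cleanly to a product of $[w]_{A_p}$ and $[w_1]_{A_{p_1}}$. Once the single-parameter reduction and the symmetric dual form above are in place, this collapse is essentially forced.
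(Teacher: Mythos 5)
Your proof is correct and follows essentially the same route as the paper: you parameterise the interpolated weight and exponent by a single small parameter, write $w_0$ and its dual $w_0^{-1/(p_0-1)}$ as small perturbations of $w$ and $\sigma=w^{-1/(p-1)}$ (with compensating fractional powers of $w_1$ and $\sigma_1$), and then estimate $[w_0]_{A_{p_0}}$ by H\"older followed by Coifman--Fefferman reverse H\"older, landing on the same bound $[w]_{A_p}^{1+s}[w_1]_{A_{p_1}}^{s}$. Your reparametrisation by $s$ and the symmetric dual identity $w_0^{-1/(p_0-1)}=\sigma^{1+\eta}\sigma_1^{-\eta}$ make the bookkeeping cleaner (and you tune the H\"older exponents so that reverse H\"older is only needed for $w$ and $\sigma$, rather than for all four weights as in the paper), but conceptually this is the same argument.
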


\begin{proof}
Note that the choice of $\theta\in(0,1)$ determines both
\begin{equation*}
  q_0=q_0(\theta)=\frac{1-\theta}{\frac{1}{q}-\frac{\theta}{q_1}}\qquad\text{and}\qquad
  w_0=w_0(\theta)=w^{\frac{q_0}{q(1-\theta)}}w_1^{-\frac{q_0\cdot\theta}{q_1(1-\theta)}},
\end{equation*}
so it remains to check that we can choose $\theta\in(0,1)$ so that $q_0\in(\lambda,\infty)$ and $w_0\in A_{q_0/\lambda}(\R^d)$. Since $q_0(0)=q\in(\lambda,\infty)$, the first condition is obvious for small enough $\theta>0$ by continuity. To simplify writing, we denote $p_i:=q_i/\lambda$ for $i=0,1$ and $p:=q/\lambda$, and observe that these satisfy the same relations
\begin{equation*}
  p_0=p_0(\theta)=\frac{q_0(\theta)}{\lambda}=\frac{1-\theta}{\frac{\lambda}{q}-\frac{\theta\lambda}{q_1}}
  =\frac{1-\theta}{\frac{1}{p}-\frac{\theta}{p_1}}
\end{equation*}
and $p_0(0)=p$ as the original exponent $q_i$ and $q$.

We need to check that $w_0\in A_{p_0}(\R^d)$, so we consider a cube $Q$ and write
\begin{equation*}
\begin{split}
  &\ave{w_0}_Q\ave{w_0^{-\frac{1}{p_0-1}}}_Q^{p_0-1}
  =\ave{w^{\frac{p_0}{p(1-\theta)}}w_1^{-\frac{p_0\cdot\theta}{p_1(1-\theta)}}}_Q
    \ave{w^{-\frac{p_0'}{p(1-\theta)}}w_1^{\frac{p_0'\cdot\theta}{p_1(1-\theta)}}}_Q^{p_0-1} \\
  &=\ave{w^{\frac{p_0}{p(1-\theta)}}(w_1^{-\frac{1}{p_1-1}})^{\frac{p_0\cdot\theta}{p_1'(1-\theta)}}}_Q
    \ave{(w^{-\frac{1}{p-1}})^{\frac{p_0'}{p'(1-\theta)}}w_1^{\frac{p_0'\cdot\theta}{p_1(1-\theta)}}}_Q^{p_0-1},
\end{split}
\end{equation*}
where $u':=u/(u-1)$ denotes the conjugate exponent of $u\in\{p,p_0,p_1\}$.

In the first average, we use H\"older's inequality with exponents $1+\eps^{\pm 1}$, and in the second with exponents $1+\delta^{\pm 1}$ to get
\begin{equation}\label{eq:beforeRHImain}
\begin{split}
  &\leq \ave{w^{\frac{p_0(1+\eps)}{p(1-\theta)}}}_Q^{\frac{1}{1+\eps}}  \ave{(w_1^{-\frac{1}{p_1-1}})^{\frac{p_0\theta(1+\eps)}{p_1'\eps(1-\theta)}}}_Q^{\frac{\eps}{1+\eps}}
  \ave{(w^{-\frac{1}{p-1}})^{\frac{p_0'(1+\delta)}{p'(1-\theta)}} }_Q^{\frac{p_0-1}{1+\delta}}  \\ 
  &\qquad\times\ave{ w_1^{\frac{p_0'\theta(1+\delta)}{p_1\delta(1-\theta)}}}_Q^{\frac{\delta(p_0-1)}{1+\delta}}  \\
  &=\ave{w^{r(\theta)}}_Q^{\frac{1}{1+\eps}}\ave{(w_1^{-\frac{1}{p_1-1}})^{s(\theta)}}_Q^{\frac{\eps}{1+\eps}}
  \ave{(w^{-\frac{1}{p-1}})^{t(\theta)} }_Q^{\frac{p_0-1}{1+\delta}}  \\
  &\qquad\times\ave{ w_1^{u(\theta)}}_Q^{\frac{\delta(p_0-1)}{1+\delta}},
\end{split}
\end{equation}
where
\begin{equation*}
  r(\theta):=\frac{p_0(\theta)(1+\eps)}{p(1-\theta)},\qquad
  s(\theta):=\frac{\theta p_0(\theta)(1+\eps)}{p_1'\eps(1-\theta)}
\end{equation*}
and
\begin{equation*}
  t(\theta):=\frac{p_0'(\theta)(1+\delta)}{p'(1-\theta)},\qquad
  u(\theta):=\frac{\theta p_0'(\theta)(1+\delta)}{p_1\delta(1-\theta)}.
\end{equation*}

Now, we choose $\eps=\frac{\theta p}{p_1'}$ and $\delta=\frac{\theta p'}{p_1}$ in such a way that
\begin{equation*}
  r(\theta)=s(\theta)=\frac{p_0(\theta)(p_1'+\theta p)}{pp_1'(1-\theta)},
\end{equation*}
and
\begin{equation*}
  t(\theta)=u(\theta)=\frac{p_0(\theta)'(p_1+\theta p')}{p'p_1(1-\theta)}. 
\end{equation*}

The strategy to proceed is to use the reverse H\"older inequality for $A_u(\R^d)$ weights due to Coifman--Fefferman \cite{CF}, which says that each $v\in A_u(\R^d)$ satisfies
\begin{equation}\label{eq:RHI}
  \ave{v^t}_Q^{1/t}\lesssim \ave{v}_Q
\end{equation}
for all $t\leq 1+\eta$, for some $\eta>0$ depending only on $[v]_{A_u}$. (For a sharp quantitative version, see Theorem 2.3 in \cite{HytPer}.)

Recalling that $p_0(0)=p$, we see that $r(0)=s(0)=1$. By continuity, given any $\eta>0$, we find that 
\begin{equation*}
  \max(r(\theta),s(\theta))\leq 1+\eta\,\,\,\text{for all small enough}\,\,\,\theta>0.
\end{equation*}
By property \eqref{weights prop. 1} of Proposition \ref{prop:weights} each of the four functions $w\in A_p(\R^d)$, $w^{-\frac{1}{p-1}}\in A_{p'}(\R^d)$, $w_1\in A_{p_1}(\R^d)$ and $w_1^{-\frac{1}{p_1-1}}\in A_{p_1'}(\R^d)$ satisfies the reverse H\"older inequality \eqref{eq:RHI} for all $t\leq 1+\eta$ and for some $\eta>0$. Thus, for all small enough $\theta>0$, we have
\begin{equation*}
\begin{split}
  \eqref{eq:beforeRHImain}
  &\lesssim \ave{w}_Q^{\frac{p_0}{p(1-\theta)}} \ave{w_1^{-\frac{1}{p_1-1}}}_Q^{\frac{\theta p_0}{p_1'(1-\theta)}}
  \ave{w^{-\frac{1}{p-1}} }_Q^{\frac{p_0'(p_0-1)}{p'(1-\theta)}}
  \ave{ w_1 }_Q^{\frac{\theta p_0'(p_0-1)}{p_1(1-\theta)}}  \\
  &=\ave{w}_Q^{\frac{p_0(\theta) }{p(1-\theta)}} \ave{w_1^{-\frac{1}{p_1-1}}}_Q^{\frac{\theta p_0(\theta)}{p_1'(1-\theta)}}\ave{w^{-\frac{1}{p-1}} }_Q^{\frac{p_0(\theta)}{p'(1-\theta)}}\ave{ w_1 }_Q^{\frac{\theta p_0(\theta)}{p_1(1-\theta)}}  \\
  &=(\ave{w}_Q\ave{w^{-\frac{1}{p-1}} }_Q^{p-1})^{\frac{p_0(\theta) }{p(1-\theta)}}(\ave{ w_1 }_Q\ave{w_1^{-\frac{1}{p_1-1}}}_Q^{p_1-1})^{\frac{\theta p_0(\theta)}{p_1(1-\theta)}}  \\
 & \leq [w]_{A_p}^{\frac{p_1}{p_1-\theta p}}[w_1]_{A_{p_1}}^{\frac{\theta p}{p_1-\theta p}}.
\end{split}
\end{equation*}
In combination with the lines preceding \eqref{eq:beforeRHImain}, we have shown that
\begin{equation*}
  [w_0]_{A_{p_0}}\lesssim  [w]_{A_p}^{\frac{p_1}{p_1-\theta p}}[w_1]_{A_{p_1}}^{\frac{\theta p}{p_1-\theta p}}<\infty,
\end{equation*}
provided that $\theta>0$ is small enough. This concludes the proof.
\end{proof}

\begin{lemma}\label{lem:main1}
Let $1<p_1\leq q_1<\infty$, $1<p\leq q<\infty$, $w_1\in A_{p_1,q_1}(\R^d)$, $w\in A_{p,q}(\R^d)$. Then there exist $1<p_0\leq q_0<\infty$, $w_0\in A_{p_0,q_0}(\R^d)$, and $\theta\in(0,1)$ such that the conclusion of Theorem \ref{thm:SW} holds, i.e.,
\begin{equation*}
  [L^{p_0}({w_0}^{p_0}),L^{p_1}({w_1}^{p_1})]_\theta=L^p(w^p),\qquad[L^{q_0}({w_0}^{q_0}),L^{q_1}({w_1}^{q_1})]_\theta=L^q(w^q)
\end{equation*}
where
\begin{equation*}
  \frac{1}{p}=\frac{1-\theta}{p_0}+\frac{\theta}{p_1},\qquad\frac{1}{q}=\frac{1-\theta}{q_0}+\frac{\theta}{q_1},\qquad
  w=w_0^{1-\theta}w_1^{\theta}.
\end{equation*}
\end{lemma}

\begin{proof}
Note that the choice of $\theta\in(0,1)$ determines 
\begin{equation*}
  p_0=p_0(\theta)=\frac{1-\theta}{\frac{1}{p}-\frac{\theta}{p_1}},\quad
  q_0=q_0(\theta)=\frac{1-\theta}{\frac{1}{q}-\frac{\theta}{q_1}},\quad
  w_0=w_0(\theta)=w^{\frac{1}{1-\theta}}w_1^{-\frac{\theta}{1-\theta}},
\end{equation*}
so it remains to check that we can choose $\theta\in(0,1)$ so that $1<p_0\leq q_0<\infty$ and $w_0\in A_{p_0,q_0}(\R^d)$. Since $1<p_0(0)=p\leq q=q_0(0)<\infty$, the first condition is obvious for small enough $\theta>0$ by continuity. 

We need to check that $w_0\in A_{p_0,q_0}(\R^d)$, so we consider a cube $Q$ and write
\begin{equation*}
  \ave{w_0^{q_0}}_Q^{\frac{1}{q_0}}\ave{w_0^{-p_0'}}_Q^{\frac{1}{p_0'}}
  =\ave{w^{\frac{q_0}{1-\theta}}w_1^{-\frac{q_0\cdot\theta}{1-\theta}}}_Q^{\frac{1}{q_0}}
  \ave{w^{-\frac{p_0'}{1-\theta}}w_1^{\frac{p_0'\cdot\theta}{1-\theta}}}_Q^{\frac{1}{p_0'}},
\end{equation*}
where $p_0':=p_0/(p_0-1)$ denotes the conjugate exponent of $p_0$.

In the first average, we use H\"older's inequality with exponents $1+\eps^{\pm 1}$, and in the second with exponents $1+\delta^{\pm 1}$ to get
\begin{equation}\label{eq:beforeRHI1}
\begin{split}
  &\leq \ave{w^{\frac{q_0(1+\eps)}{1-\theta}}}_Q^{\frac{1}{q_0(1+\eps)}} \ave{w_1^{-\frac{q_0\theta(1+\eps)}{\eps(1-\theta)}}}_Q^{\frac{\eps}{q_0(1+\eps)}}  \\ &\qquad\times\ave{w^{-\frac{p_0'(1+\delta)}{1-\theta}}}_Q^{\frac{1}{p_0'(1+\delta)}} \ave{w_1^{\frac{p_0'\theta(1+\delta)}{\delta(1-\theta)}}}_Q^{\frac{\delta}{p_0'(1+\delta)}}  \\
  &=\ave{(w^{q})^{r(\theta)}}_Q^{\frac{1}{q_0(1+\eps)}} \ave{(w_1^{-p_1'})^{s(\theta)}}_Q^{\frac{\eps}{q_0(1+\eps)}}  \\
  &\qquad\times\ave{(w^{-p'})^{t(\theta)}}_Q^{\frac{1}{p_0'(1+\delta)}} \ave{ (w_1^{q_1})^{u(\theta)}}_Q^{\frac{\delta}{p_0'(1+\delta)}},
\end{split}
\end{equation}
where
\begin{equation*}
  r(\theta):=\frac{q_0(\theta)(1+\eps)}{q(1-\theta)},\qquad
  s(\theta):=\frac{\theta q_0(\theta)(1+\eps)}{p_1'\eps(1-\theta)}
\end{equation*}
and
\begin{equation*}
  t(\theta):=\frac{p_0'(\theta)(1+\delta)}{p'(1-\theta)},\qquad
  u(\theta):=\frac{\theta p_0(\theta)'(1+\delta)}{q_1\delta(1-\theta)}.
\end{equation*}

Now, we choose $\eps=\frac{\theta q}{p_1'}$ and $\delta=\frac{\theta p'}{q_1}$ in such a way that
\begin{equation*}
  r(\theta)= s(\theta)=\frac{q_0(\theta)(p_1'+\theta q)}{qp_1'(1-\theta)},
\end{equation*}
and
\begin{equation*}
  t(\theta)=u(\theta)=\frac{p_0'(\theta)(q_1+\theta p')}{q_1 p'(1-\theta)}.
\end{equation*}

The strategy to proceed is the same as in the proof of Lemma \ref{lem:main}. In particular, we use the reverse H\"older inequality (\ref{eq:RHI}) for $A_v(\R^d)$ weights.
Recalling that $p_0(0)=p$ and $q_0(0)=q$, we see that $r(0)=t(0)=1$. By continuity, given any $\eta>0$, we find that 
\begin{equation*}
\max(r(\theta),t(\theta))\leq 1+\eta\,\,\,\text{for all small enough}\,\,\,\theta>0.
\end{equation*}
By property \eqref{weights prop. 4} of Proposition \ref{prop:weights} each of the four functions $w^q\in A_{1+\frac{q}{p'}}(\R^d)$, $w^{-p'}\in A_{1+p'/q}(\R^d)$, $w_1^{q_1}\in A_{1+q_1/p_1'}(\R^d)$ and $w_1^{-p_1'}\in A_{1+p_1'/q_1}(\R^d)$ satisfies the reverse H\"older inequality (\ref{eq:RHI}) for all $t\leq 1+\eta$ and for some $\eta>0$. Thus, for all small enough $\theta>0$, we have
\begin{equation*}
\begin{split}
  \eqref{eq:beforeRHI1}
  &\lesssim  \ave{w^{q}}_Q^{\frac{1}{q(1-\theta)}} \ave{w_1^{-p_1'}}_Q^{\frac{\theta}{p_1'(1-\theta)}}  \\
  &\qquad\times\ave{w^{-p'}}_Q^{\frac{1}{p'(1-\theta)}} \ave{w_1^{q_1}}_Q^{\frac{\theta}{q_1(1-\theta)}}  \\
  &=\bigg(\ave{w^q}_Q^{\frac{1}{q}} \ave{w^{-p'}}_Q^{\frac{1}{p'}}\bigg)^{\frac{1}{1-\theta}}
    \bigg(\ave{w_1^{q_1}}_Q^{\frac{1}{q_1}} \ave{w_1^{-p_1'}}_Q^{\frac{1}{p_1'}}\bigg)^{\frac{\theta}{1-\theta}}  \\
  &\leq [w]_{A_{p,q}}^{\frac{1}{1-\theta}}[w_1]_{A_{p_1,q_1}}^{\frac{\theta}{1-\theta}}.
\end{split}
\end{equation*}
In combination with the lines preceding \eqref{eq:beforeRHI1}, we have shown that
\begin{equation*}
  [w_0]_{A_{p_0,q_0}}\lesssim  [w]_{A_{p,q}}^{\frac{1}{1-\theta}}[w_1]_{A_{p_1,q_1}}^{\frac{\theta}{1-\theta}}<\infty,
\end{equation*}
provided that $\theta>0$ is small enough. This concludes the proof.
\end{proof}

\begin{lemma}\label{lem:main2}
Let $1\leq p_{-}<p_{+}<\infty$, $q_1\in[p_{-},p_{+}]$, $q\in(p_{-},p_{+})$, and 
\begin{equation*}
  w_1\in A_{q_1/p_{-}}(\R^d)\cap RH_{(p_{+}/q_1)'}(\R^d),\qquad w\in A_{q/p_{-}}(\R^d)\cap
  RH_{(p_{+}/q)'}(\R^d).
\end{equation*}
Then there exist $q_0\in(p_{-},p_{+})$, $w_0\in A_{q_0/p_{-}}(\R^d)\cap RH_{(p_{+}/q_0)'}(\R^d)$, and $\theta\in(0,1)$ such that (\ref{eq:convexity}) holds.
\end{lemma}

\begin{proof}
We prove the lemma in the following three separate cases: $q_1\in(p_{-},p_{+})$, $q_1=p_{-}$ and $q_1=p_{+}$. Let us assume that $q_1\in(p_{-},p_{+})$. By property \eqref{weights prop. 5} of Proposition \ref{prop:weights} we prove the lemma in its equivalent form:
if $v_1:=w_1^{(p_{+}/q_1)'}\in A_{s_1}(\R^d)$ and $v:=w^{(p_{+}/q)'}\in A_{s}(\R^d)$ then there exist $q_0\in(p_{-},p_{+})$, $v_0:=w_0^{(p_{+}/q_0)'}\in A_{s_0}(\R^d)$, and $\theta\in(0,1)$ such that
\begin{equation*}
  [L^{q_0}(w_0),L^{q_1}(w_1)]_\theta=L^q(w),
\end{equation*}
where
\begin{equation*}
  \frac{1}{q}=\frac{1-\theta}{q_0}+\frac{\theta}{q_1},\qquad
  w^{\frac{1}{q}}=w_0^{\frac{1-\theta}{q_0}}w_1^{\frac{\theta}{q_1}},
\end{equation*}
and
\begin{equation*}
\begin{split}
  &s_1=\bigg(\frac{p_{+}}{q_1}\bigg)'\bigg(\frac{q_1}{p_{-}}-1\bigg)+1,  \\ &s=\bigg(\frac{p_{+}}{q}\bigg)'\bigg(\frac{q}{p_{-}}-1\bigg)+1,  \\ 
  &s_0=\bigg(\frac{p_{+}}{q_0}\bigg)'\bigg(\frac{q_0}{p_{-}}-1\bigg)+1.
\end{split}
\end{equation*}

Note that the choice of $\theta\in(0,1)$ determines both 
\begin{equation*}
  q_0=q_0(\theta)=\frac{1-\theta}{\frac{1}{q}-\frac{\theta}{q_1}},\quad
  w_0=w_0(\theta)=w^{\frac{q_0}{q(1-\theta)}}w_1^{-\frac{q_0\cdot\theta}{q_1(1-\theta)}},
\end{equation*}
so it remains to check that we can choose $\theta\in(0,1)$ so that $q_0\in(p_{-},p_{+})$ and $v_0=w_0^{(p_{+}/q_0)'}\in A_{s_0}(\R^d)$, where $s_0=\big(\frac{p_{+}}{q_0}\big)'\big(\frac{q_0}{p_{-}}-1\big)+1$. Since $q_0(0)=q\in(p_{-},p_{+})$, the first condition is obvious for small enough $\theta>0$ by continuity. 

We need to check that $v_0=w_0^{(p_{+}/q_0)'}\in A_{s_0}(\R^d)$, so we consider a cube $Q$ and write
\begin{equation}\label{eq:beforerhi}
\begin{split}
  \ave{v_0}_Q\ave{v_0^{-\frac{1}{s_0-1}}}_Q&= \ave{w_0^{(p_{+}/q_0)'}}_Q\ave{w_0^{(p_{+}/q_0)'(-\frac{1}{s_0-1})}}_Q^{s_0-1}  \\
  &= \ave{w^{\frac{q_0(p_{+}/q_0)'}{q(1-\theta)}}w_1^{-\frac{q_0\cdot\theta(p_{+}/q_0)'}{q_1(1-\theta)}}}_Q  \\
  &\qquad\times\ave{w^{-\frac{q_0(p_{+}/q_0)'}{q(1-\theta)(s_0-1)}}w_1^{\frac{q_0\cdot\theta(p_{+}/q_0)'}{q_1(1-\theta)(s_0-1)}}}_Q^{s_0-1}.
\end{split}
\end{equation}

In the first average, we use H\"older's inequality with exponents $1+\eps^{\pm 1}$ and in the second with exponents $1+\delta^{\pm 1}$ to get
\begin{equation}\label{eq:beforeRHI2}
\begin{split}
  &\leq \ave{w^{\frac{q_0(p_{+}/q_0)'(1+\eps)}{q(1-\theta)}}}_Q^\frac{1}{1+\eps} \ave{w_1^{-\frac{q_0\theta(p_{+}/q_0)'(1+\eps)}{q_1\eps(1-\theta)}}}_Q^\frac{\eps}{1+\eps}   \\ 
  &\qquad\times\ave{w^{-\frac{q_0(p_{+}/q_0)'(1+\delta)}{q(1-\theta)(s_0-1)}}}_Q^{\frac{s_0-1}{1+\delta}} \ave{w_1^{\frac{q_0\theta(p_{+}/q_0)'(1+\delta)}{q_1\delta(1-\theta)(s_0-1)}}}_Q^\frac{(s_0-1)\delta}{1+\delta}  \\
  &= \ave{(w^{(p_{+}/q)'})^{\tilde r(\theta)}}_Q^\frac{1}{1+\eps} \ave{(w_1^{(p_{+}/q_1)'(-\frac{1}{s_1-1})})^{\tilde s(\theta)}}_Q^\frac{\eps}{1+\eps}  \\
  &\qquad\times\ave{(w^{(p_{+}/q)'(-\frac{1}{s-1})})^{\tilde t(\theta)}}_Q^{\frac{s_0-1}{1+\delta}} \ave{(w_1^{(p_{+}/q_1)'})^{\tilde u(\theta)}}_Q^\frac{(s_0-1)\delta}{1+\delta}  \\
  &=\ave{v^{\tilde r(\theta)}}_Q^\frac{1}{1+\eps} \ave{(v_1^{-\frac{1}{s_1-1}})^{\tilde s(\theta)}}_Q^\frac{\eps}{1+\eps}  \\
  &\qquad\times\ave{(v^{-\frac{1}{s-1}})^{\tilde t(\theta)}}_Q^{\frac{s_0-1}{1+\delta}} \ave{v_1^{\tilde u(\theta)}}_Q^\frac{(s_0-1)\delta}{1+\delta},
\end{split}
\end{equation}
where
\begin{equation*}
  \tilde r(\theta):=\frac{q_0(\theta)(p_{+}-q)(1+\eps)}{q(1-\theta)(p_{+}-q_0(\theta))},\qquad
  \tilde s(\theta):=\frac{\theta q_0(\theta)(p_{+}-q_1)(s_1-1)(1+\eps)}{q_1\eps(1-\theta)(p_{+}-q_0(\theta))}
\end{equation*}
and
\begin{equation*}
  \tilde t(\theta):=\frac{q_0(\theta)(p_{+}-q)(s-1)(1+\delta)}{q(1-\theta)(s_0(\theta)-1)(p_{+}-q_0(\theta))},
\end{equation*}
\begin{equation*}
  \tilde u(\theta):=\frac{\theta q_0(\theta)(p_{+}-q_1)(1+\delta)}{q_1\delta(1-\theta)(s_0(\theta)-1)(p_{+}-q_0(\theta))}.
\end{equation*}

Now, we choose $\eps=\frac{\theta q(p_{+}-q_1)(s_1-1)}{q_1(p_{+}-q)}$ and $\delta=\frac{\theta q(p_{+}-q_1)}{q_1(p_{+}-q)(s-1)}$ in such a way that
\begin{equation*}
  \tilde r(\theta)=\tilde s(\theta)=\frac{q_0(\theta)(q_1(p_{+}-q)+\theta q(p_{+}-q_1)(s_1-1))}{qq_1(1-\theta)(p_{+}-q_0(\theta))},
\end{equation*}
and
\begin{equation*}
  \tilde t(\theta)=\tilde u(\theta)=\frac{q_0(\theta)(q_1(p_{+}-q)(s-1)+\theta q(p_{+}-q_1))}{qq_1(1-\theta)(s_0(\theta)-1)(p_{+}-q_0(\theta))}.
\end{equation*}

The strategy to proceed is the same as in the proof of Lemma \ref{lem:main}. In particular, we use the reverse H\"older inequality (\ref{eq:RHI}) for $A_v(\R^d)$ weights.

Recalling that $q_0(0)=q$, we see that $\tilde r(0)=\tilde t(0)=1$. By continuity, given any $\eta>0$, we find that
\begin{equation*}
\max(\tilde r(\theta),\tilde t(\theta))\leq 1+\eta\,\,\,\text{for all small enough}\,\,\,\theta>0.
\end{equation*}

By property \eqref{weights prop. 1} of Proposition \ref{prop:weights} each of the four functions $v\in A_s(\R^d)$, $v^{-\frac{1}{s-1}}\in A_{s'}(\R^d)$, $v_1\in A_{s_1}(\R^d)$ and $v_1^{-\frac{1}{s_1-1}}\in A_{s'_1}(\R^d)$ satisfies the reverse H\"older inequality (\ref{eq:RHI}) for all $t\leq 1+\eta$ and for some $\eta>0$. Thus, for all small enough $\theta>0$, we have
\begin{equation*}
\begin{split}
  \eqref{eq:beforeRHI2}
  &\lesssim  \ave{v}_Q^\frac{q_0(p_{+}-q)}{q(1-\theta)(p_{+}-q_0)} \ave{v_1^{-\frac{1}{s_1-1}}}_Q^\frac{\theta q_0(p_{+}-q_1)(s_1-1)}{q_1(1-\theta)(p_{+}-q_0)}  \\
  &\qquad\times\ave{v^{-\frac{1}{s-1}}}_Q^{\frac{q_0(p_{+}-q)(s-1)}{q(1-\theta)(p_{+}-q_0)}} \ave{v_1}_Q^\frac{\theta q_0(p_+-q_1)}{q_1(1-\theta)(p_{+}-q_0)}  \\
  &=(\ave{v}_Q \ave{v^{-\frac{1}{s-1}}}_Q^{s-1})^\frac{q_0(p_{+}-q)}{q(1-\theta)(p_{+}-q_0)}  \\
  &\qquad\times(\ave{v_1}_Q \ave{v_1^{-\frac{1}{s_1-1}}}_Q^{s_1-1})^\frac{\theta q_0(p_{+}-q_1)}{q_1(1-\theta)(p_{+}-q_0)}  \\
  &\leq[v]_{A_s}^\frac{q_1(p_{+}-q)}{p_{+}(q_1-\theta q)-qq_1(1-\theta)}[v_1]_{A_{s_1}}^\frac{\theta q(p_{+}-q_1)}{p_{+}(q_1-\theta q)-qq_1(1-\theta)}.
\end{split}
\end{equation*}
In combination with the lines preceding \eqref{eq:beforeRHI2}, we have shown that
\begin{equation*}
  [v_0]_{A_{s_0}}\lesssim [v]_{A_s}^\frac{q_1(p_{+}-q)}{p_{+}(q_1-\theta q)-qq_1(1-\theta)}[v_1]_{A_{s_1}}^\frac{\theta q(p_{+}-q_1)}{p_{+}(q_1-\theta q)-qq_1(1-\theta)}<\infty,
\end{equation*}
provided that $\theta>0$ is small enough. This concludes the proof in the case $q_1\in(p_{-},p_{+})$. 

The case of $q_1=p_{-}$ (thus $v_1:=w_1^{(p_{+}/q_1)'}\in A_{1}(\R^d)$) follows by similar computations but with the main difference that in the first average of \eqref{eq:beforerhi}, we do not use H\"older's inequality. In particular, this average is bounded from above by
\begin{equation*}
\begin{split}
  &\leq\ave{w^{\frac{q_0(p_{+}/q_0)'}{q(1-\theta)}}}_Q\Norm{w_1^{-\frac{q_0\cdot\theta(p_{+}/q_0)'}{q_1(1-\theta)}}}{L^\infty(Q)}  \\
  &=\ave{(w^{(p_{+}/q)'})^{\tilde r(\theta)}}_Q\Norm{(w_1^{-(p_{+}/q_1)'})^{\tilde s(\theta)}}{L^\infty(Q)} \\
  &=\ave{v^{\tilde r(\theta)}}_Q\Norm{v_1^{-1}}{L^\infty(Q)}^{\tilde s(\theta)}  \\
  &\lesssim \ave{v}_Q^{\tilde r(\theta)}\Norm{v_1^{-1}}{L^\infty(Q)}^{\tilde s(\theta)},
\end{split}
\end{equation*}
where 
\begin{equation*}
  \tilde r(\theta):=\frac{q_0(\theta)(p_{+}-q)}{q(1-\theta)(p_{+}-q_0(\theta))},\qquad
  \tilde s(\theta):=\frac{\theta q_0(\theta)(p_{+}-q_1)}{q_1(1-\theta)(p_{+}-q_0(\theta))}
\end{equation*}
and in the last step we used the reverse H\"older inequality (\ref{eq:RHI}) for the weight $v\in A_s(\R^d)$. The rest of the proof concerning the second average of \eqref{eq:beforerhi} remains the same as in the case $q_1\in(p_{-},p_{+})$.

Now, let us assume that $q_1=p_{+}$ so that $w_1\in A_{p_{+}/p_{-}}(\R^d)\cap RH_{\infty}(\R^d)$. The first condition of the lemma is proved in the same way as in the previous cases. We only need to check that $w_0^{(p_{+}/q_0)'}\in A_{s_0}(\R^d)$, so we consider a cube $Q$ and write

\begin{equation*}
\begin{split}
  \ave{w_0^{(p_{+}/q_0)'}}_Q\ave{w_0^{(p_{+}/q_0)'(-\frac{1}{s_0-1})}}_Q^{s_0-1}&=\ave{w^{\frac{q_0(p_{+}/q_0)'}{q(1-\theta)}}w_1^{-\frac{q_0\cdot\theta(p_{+}/q_0)'}{p_{+}(1-\theta)}}}_Q  \\
  &\qquad\times\ave{w^{-\frac{q_0(p_{+}/q_0)'}{q(1-\theta)(s_0-1)}}w_1^{\frac{q_0\cdot\theta(p_{+}/q_0)'}{p_{+}(1-\theta)(s_0-1)}}}_Q^{s_0-1}.
\end{split}
\end{equation*}

In the first average, we use H\"older's inequality with exponents $1+\eps^{\pm 1}$ and we bound from above the second average as follows
\begin{equation}\label{eq1:beforeRHI2}
\begin{split}
  &\leq \ave{w^{\frac{q_0(p_{+}/q_0)'(1+\eps)}{q(1-\theta)}}}_Q^\frac{1}{1+\eps} \ave{w_1^{-\frac{q_0\theta(p_{+}/q_0)'(1+\eps)}{p_{+}\eps(1-\theta)}}}_Q^\frac{\eps}{1+\eps}   \\ 
  &\qquad\times\ave{w^{-\frac{q_0(p_{+}/q_0)'}{q(1-\theta)(s_0-1)}}}_Q^{s_0-1} \Norm{w_1^{\frac{q_0\theta(p_{+}/q_0)'}{p_{+}(1-\theta)(s_0-1)}}}{L^\infty(Q)}^{s_0-1}  \\
  &=\ave{(w^{(p_{+}/q)'})^{\tilde r(\theta)}}_Q^\frac{1}{1+\eps} \ave{(w_1^{-\frac{1}{\frac{p_{+}}{p_{-}}-1}})^{\tilde s(\theta)}}_Q^\frac{\eps}{1+\eps}  \\
  &\qquad\times\ave{(w^{(p_{+}/q)'(-\frac{1}{s-1})})^{\tilde t(\theta)}}_Q^{s_0-1} \Norm{w_1}{L^\infty(Q)}^\frac{q_0\theta(p_{+}/q_0)'}{p_{+}(1-\theta)},
\end{split}
\end{equation}
where
\begin{equation*}
  \tilde r(\theta):=\frac{q_0(\theta)(p_{+}-q)(1+\eps)}{q(1-\theta)(p_{+}-q_0(\theta))},\qquad
  \tilde s(\theta):=\frac{\theta q_0(\theta)(p_{+}-p_{-})(1+\eps)}{p_{-}\eps(1-\theta)(p_{+}-q_0(\theta))}
\end{equation*}
and
\begin{equation*}
  \tilde t(\theta):=\frac{q_0(\theta)(p_{+}-q)(s-1)}{q(1-\theta)(s_0(\theta)-1)(p_{+}-q_0(\theta))}.
\end{equation*}

Now, we choose $\eps=\frac{\theta q(p_{+}-p_{-})}{p_{-}(p_{+}-q)}$ in such a way that
\begin{equation*}
  \tilde r(\theta)=\tilde s(\theta)=\frac{q_0(\theta)(p_{-}(p_{+}-q)+\theta q(p_{+}-p_{-}))}{qp_{-}(1-\theta)(p_{+}-q_0(\theta))}.
\end{equation*}

The strategy to proceed is the same as in the the previous cases. In particular, we use the reverse H\"older inequality (\ref{eq:RHI}) for $A_v(\R^d)$ weights.

Recalling that $q_0(0)=q$, we see that $\tilde r(0)=\tilde t(0)=1$. By continuity, given any $\eta>0$, we find that
\begin{equation*}
\max(\tilde r(\theta),\tilde t(\theta))\leq 1+\eta\,\,\,\text{for all small enough}\,\,\,\theta>0.
\end{equation*}

By property \eqref{weights prop. 1} of Proposition \ref{prop:weights} each of the three functions $w^{(p_{+}/q)'}\in A_{s}(\R^d)$, $w^{(p_{+}/q)'(-\frac{1}{s-1})}\in A_{s'}(\R^d)$ and $w_1^{-\frac{1}{\frac{p_{+}}{p_{-}}-1}}\in A_{(p_{+}/p_{-})'}(\R^d)$ satisfies the reverse H\"older inequality (\ref{eq:RHI}) for all $t\leq 1+\eta$ and for some $\eta>0$. Thus, for all small enough $\theta>0$, we have
\begin{equation*}
\begin{split}
  \eqref{eq1:beforeRHI2}
  &\lesssim  \ave{w^{(p_{+}/q)'}}_Q^\frac{q_0(p_{+}-q)}{q(1-\theta)(p_{+}-q_0)} \ave{w_1^{-\frac{1}{\frac{p_{+}}{p_{-}}-1}}}_Q^\frac{\theta q_0(p_{+}-p_{-})}{p_{-}(1-\theta)(p_{+}-q_0)}  \\
  &\qquad\times\ave{w^{(p_{+}/q)'(-\frac{1}{s-1})}}_Q^{\frac{q_0(p_{+}-q)(s-1)}{q(1-\theta)(p_{+}-q_0)}} \Norm{w_1}{L^\infty(Q)}^\frac{\theta q_0}{(1-\theta)(p_{+}-q_0)}  \\
  &=(\ave{w^{(p_{+}/q)'}}_Q \ave{w^{(p_{+}/q)'(-\frac{1}{s-1})}}_Q^{s-1})^\frac{q_0(p_{+}-q)}{q(1-\theta)(p_{+}-q_0)}  \\
  &\qquad\times(\Norm{w_1}{L^\infty(Q)} \ave{w_1^{-\frac{1}{\frac{p_{+}}{p_{-}}-1}}}_Q^{\frac{p_{+}}{p_{-}}-1})^\frac{\theta q_0}{(1-\theta)(p_{+}-q_0)}  \\
  &\leq[w^{(p_{+}/q)'}]_{A_s}([w_1]_{RH_{\infty}}[w_1]_{\frac{p_{+}}{p_{-}}})^\frac{\theta q}{p_{+}-q}.
\end{split}
\end{equation*}
In combination with the lines preceding \eqref{eq1:beforeRHI2}, we have shown that
\begin{equation*}
  [w_0^{(p_{+}/q_0)'}]_{A_{s_0}}\lesssim [w^{(p_{+}/q)'}]_{A_s}([w_1]_{RH_{\infty}}[w_1]_{\frac{p_{+}}{p_{-}}})^\frac{\theta q}{p_{+}-q}<\infty,
\end{equation*}
provided that $\theta>0$ is small enough. This concludes the proof in the case $q_1=p_{+}$. 
\end{proof}

\begin{remark}
Lemma \ref{lem:main2} remains true in the case $p_{+}=\infty$, provided that $q_1<\infty$. In this case the reverse H\"older conditions on $w,w_0,w_1$ are vacuous and the proof is the same as in Lemma \ref{lem:main}.
\end{remark}

We now have the last missing ingredients of the proofs of Theorems \ref{thm:RdFcompact}, \ref{thm:Off-dig.extrp.compact} and \ref{thm:limited range extrp.compact}:

\begin{proof}[Proof of Proposition \ref{prop:LpvInterm}]
With some $\lambda\in[1,\infty)$, we are given $q,q_1\in(\lambda,\infty)$ and weights $v\in A_{q/\lambda}(\R^d)$, $v_1\in A_{q_1/\lambda}(\R^d)$. By Lemma \ref{lem:main}, there is some $q_0\in(\lambda,\infty)$, a weight $v_0\in A_{q_0/\lambda}(\R^d)$, and $\theta\in(0,1)$ such that
\begin{equation*}
  \frac{1}{q}=\frac{1-\theta}{q_0}+\frac{\theta}{q_1},\qquad
  v^{\frac{1}{q}}=v_0^{\frac{1-\theta}{q_0}}v_1^{\frac{\theta}{q_1}}.
\end{equation*}
By Theorem \ref{thm:SW}, we then have $L^q(v)=[L^{q_0}(v_0),L^{q_1}(v_1)]_\theta$, as we claimed.
\end{proof}

\begin{proof}[Proof of Proposition \ref{prop:offdiagonal}]
We are given $1<p\leq q<\infty$, $1<p_1\leq q_1<\infty$, and weights $v\in A_{p,q}(\R^d)$, $v_1\in A_{p_1,q_1}(\R^d)$. By Lemma \ref{lem:main1}, there is some $1<p_0\leq q_0<\infty$, a weight $v_0\in A_{p_0,q_0}(\R^d)$, and $\theta\in(0,1)$ such that
\begin{equation}\label{eq:prop}
  \frac{1}{p}=\frac{1-\theta}{p_0}+\frac{\theta}{p_1},\qquad\frac{1}{q}=\frac{1-\theta}{q_0}+\frac{\theta}{q_1},\qquad
  v=v_0^{1-\theta}v_1^{\theta}.
\end{equation}
By Theorem \ref{thm:SW}, we then have 
\begin{equation*}
[L^{p_0}({v_0}^{p_0}),L^{p_1}({v_1}^{p_1})]_\theta=L^p(v^p)\,\,\,\text{and}\,\,\,[L^{q_0}({v_0}^{q_0}),L^{q_1}({v_1}^{q_1})]_\theta=L^q(v^q).
\end{equation*}
Moreover, by \eqref{eq:prop} the claim of the proposition follows.
\end{proof}

\begin{proof}[Proof of Proposition \ref{prop:limitedrange}]
We are given $1\leq p_{-}<p_{+}<\infty$, $q_1\in[p_{-},p_{+}]$, $q\in(p_{-},p_{+})$ and weights $v\in A_{q/p_{-}}(\R^d)\cap RH_{(p_{+}/q)'}(\R^d)$, $v_1\in A_{q_1/p_{-}}(\R^d)\cap RH_{(p_{+}/q_1)'}(\R^d)$. By Lemma \ref{lem:main2}, there is some $q_0\in(p_{-},p_{+})$, a weight $v_0\in A_{q_0/p_{-}}(\R^d)\cap RH_{(p_{+}/q_0)'}(\R^d)$, and $\theta\in(0,1)$ such that
\begin{equation*}
  \frac{1}{q}=\frac{1-\theta}{q_0}+\frac{\theta}{q_1},\qquad
  v^{\frac{1}{q}}=v_0^{\frac{1-\theta}{q_0}}v_1^{\frac{\theta}{q_1}}.
\end{equation*}
By Theorem \ref{thm:SW}, we then have $L^q(v)=[L^{q_0}(v_0),L^{q_1}(v_1)]_\theta$, as we claimed.
\end{proof}

\section{Calder\'on--Zygmund singular integrals}\label{sec:CZO}

In two applications below, we consider Calder\'on--Zygmund singular integral operators, or just Calder\'on--Zygmund operators for short, which are defined as follows: $T$ is a linear operator defined on a suitable class of test functions on $\R^d$, and it has the representation
\begin{equation*}
  Tf(x)=\int_{\R^d}K(x,y)f(y)\ud y,\qquad x\notin\supp f,
\end{equation*}
where the kernel $K$ satisfies the {\em standard estimates} 
\begin{equation*}
  \abs{K(x,y)}\lesssim\frac{1}{\abs{x-y}^d}
\end{equation*}
and, for some $\delta\in(0,1]$,
\begin{equation*}
  \abs{K(x,y)-K(x',y)}+\abs{K(y,x)-K(y,x')}\lesssim \frac{\abs{x-x'}^\delta}{\abs{x-y}^{d+\delta}},
\end{equation*}
for all $x,x',y\in\R^d$ such that $\abs{x-y}>\frac12\abs{x-x'}$. 

In our applications of Theorem \ref{thm:RdFcompact} to these operators, we never need to refer to the above definition; rather, we can apply several previous results for these operators as a black box. For our first application, we only need the following classical result of Coifman--Fefferman \cite{CF}. (See \cite{Lerner:simple} for a modern approach that also gives the sharp dependence on $[w]_{A_p}$ from \cite{Hytonen:A2}.)

\begin{theorem}[\cite{CF}]\label{thm:CF}
Let $T$ be a Calder\'on--Zygmund operator that extends to a bounded operator on $L^2(\R^d)$. Then $T$ extends to a bounded operator on $L^p(w)$ for all $p\in(1,\infty)$ and all $w\in A_p(\R^d)$.
\end{theorem}

We can now give a quick proof and a minor extension of a very recent result of Stockdale--Villarroya--Wick \cite{SVD}, which deals with $p_1=2$ and $w_1\equiv 1$:

\begin{corollary}[\cite{SVD}, Theorem 1.1]\label{cor:SVD}
Let $T$ be a Calder\'on--Zygmund operator that extends compactly to $L^{p_1}(w_1)$ for some $p_1\in(1,\infty)$ and some $w_1\in A_{p_1}(\R^d)$. Then $T$ extends to a compact operator on $L^p(w)$ for all $p\in(1,\infty)$ and all $w\in A_p(\R^d)$.
\end{corollary}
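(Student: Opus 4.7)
The plan is to apply Theorem~\ref{thm:RdFcompact} with $\lambda = 1$, taking the same $p_1$ and $w_1$ as in the hypothesis. The compactness assumption of Theorem~\ref{thm:RdFcompact} is handed to us by the hypothesis of the corollary; so the only thing left to verify is its boundedness assumption, namely that $T$ is bounded on $L^{p_1}(\tilde w)$ for \emph{every} $\tilde w \in A_{p_1}(\R^d)$, with operator norm dominated by an increasing function of $[\tilde w]_{A_{p_1}}$.

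I would supply this using Theorem~\ref{thm:CF} (Coifman--Fefferman), which reduces the task to showing that $T$ is bounded on $L^2(\R^d)$. Compactness on $L^{p_1}(w_1)$ at least gives boundedness on $L^{p_1}(w_1)$, so the remaining step is an $L^{p_1}(w_1)\to L^2(\R^d)$ bootstrap for Calder\'on--Zygmund operators: since $T$ has a standard kernel, classical weighted CZ tools (sparse domination, sharp-function estimates, or good-$\lambda$ inequalities) transport boundedness from one weighted $L^p$-setting to any other, and in particular yield unweighted $L^2$-boundedness. Once that is in hand, Theorem~\ref{thm:CF} produces boundedness on $L^p(w)$ for all $p \in (1,\infty)$ and all $w \in A_p(\R^d)$, with quantitative control by $[w]_{A_p}$; specializing to $p = p_1$ verifies the missing hypothesis of Theorem~\ref{thm:RdFcompact}, whose conclusion then gives the corollary.

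The main obstacle is thus the $L^2$-bootstrap; the rest is pure bookkeeping. In the special case $p_1 = 2$, $w_1 \equiv 1$ treated by Stockdale--Villarroya--Wick, the bootstrap is vacuous because $L^2$-boundedness is immediate from $L^2$-compactness, and the present approach reduces to a clean black-box combination of Theorem~\ref{thm:RdFcompact} with Theorem~\ref{thm:CF}. This also explains why the corollary extends their result so painlessly: the essential analytic content is packaged into the extrapolation-of-compactness Theorem~\ref{thm:RdFcompact}.
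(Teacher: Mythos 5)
Your proposal is the same route the paper takes: apply Theorem~\ref{thm:RdFcompact} with $\lambda=1$ at the given $(p_1,w_1)$, using Theorem~\ref{thm:CF} to furnish boundedness on $L^{p_1}(\tilde w)$ for every $\tilde w \in A_{p_1}(\R^d)$. The refinement you add is genuine: the paper's three-line proof silently passes from compactness (hence boundedness) on a single $L^{p_1}(w_1)$ to the unweighted $L^2(\R^d)$-boundedness that Theorem~\ref{thm:CF} formally requires, and you correctly flag this as a bootstrap step needed whenever $(p_1,w_1)\neq(2,1)$. The bootstrap is indeed classical --- for a Calder\'on--Zygmund operator bounded on some $L^{p_1}(w_1)$ with $w_1\in A_{p_1}$, a $T(1)$-type testing argument (the weak boundedness property and $T1,T^*1\in\BMO$ follow by pairing bump functions with the weight and its dual and invoking the $A_{p_1}$ condition), or a Fefferman--Stein sharp-function estimate $M^\sharp(Tf)\lesssim Mf$ with the a priori integrability supplied by the $L^{p_1}(w_1)$ bound, yields $L^2$-boundedness; sparse domination in its usual form assumes unweighted input, so is the least direct of the tools you list. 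With that one step made explicit, your bookkeeping is exactly the paper's.
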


\begin{proof}
We verify the assumptions of Theorem \ref{thm:RdFcompact} for $\lambda=1$, and the exponent $p_1$ and weight $w_1$ appearing in the statement of the Corollary:
By Theorem \ref{thm:CF}, $T$ extends to a bounded operator on $L^{p_1}(\tilde w)$ for all $\tilde w\in A_{p_1}(\R^d)$.
By assumption, $T$ is compact on $L^{p_1}(w_1)$ for some $w_1\in A_{p_1}(\R^d)$.
Thus Theorem \ref{thm:RdFcompact} applies to give the compactness of $T$ on $L^p(w)$ for all $p\in(1,\infty)$ and all $w\in A_p(\R^d)$.
\end{proof}

The proof in \cite{SVD} was based on two quite recent ingredients: the technique of {\em sparse domination} of Calder\'on--Zygmund operators, which was essentially started by Lerner \cite{Lerner:simple} and thereafter extensively developed by many authors, together with a {\em characterisation of compactness} of Calder\'on--Zygmund operators due to Villarroya \cite{Villar:15}. We avoid all this.

\section{Commutators with functions of bounded mean oscillation}

Our several subsequent applications of Theorem \ref{thm:RdFcompact} deal with commutators of the form
\begin{equation*}
  [b,T]:f\mapsto bT(f)-T(bf),
\end{equation*}
where the pointwise multiplier $b$ belongs to the space
\begin{equation*}
  \BMO(\R^d):=\Big\{f:\R^d\to\C\ \Big|\ \Norm{f}{\BMO}:=\sup_Q\ave{\abs{f-\ave{f}_Q}}_Q<\infty\Big\}
\end{equation*}
 of functions of bounded mean oscillation, or its subspace
 \begin{equation*}
  \CMO(\R^d):=\overline{C_c^\infty(\R^d)}^{\BMO(\R^d)},
\end{equation*}
where the closure in the $\BMO$ norm. This subspace is also denoted by $\VMO(\R^d)$ in some papers; however, a distinction between the two notions is made in \cite{Bourdaud}.  There appears to be some confusion about these spaces in the literature, resulting from the fact that not all ``natural'' definitions lead to the same space. 

A powerful general result about commutators is due to \'Alvarez et al. \cite{ABKP}:

\begin{theorem}[\cite{ABKP}]\label{thm:commuBasic}
In the setting of Theorem \ref{thm:RdF}, suppose also that $b\in\BMO(\R^d)$. Then also $[b,T]$ extends to a bounded linear operator on $L^{p_1}(\tilde w)$ for all $\tilde w\in A_{p_1/\lambda}(\R^d)$, and its operator norm is dominated by another increasing function of $[\tilde w]_{A_{p_1/\lambda}}$. 
\end{theorem}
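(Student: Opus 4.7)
The standard approach to results of this type is the Coifman--Rochberg--Weiss conjugation trick, which I would adapt here. I introduce the analytic family of operators
\begin{equation*}
  T_z f := e^{zb}\, T(e^{-zb} f), \qquad z \in \C,
\end{equation*}
and observe that, at least formally, $\partial_z T_z f|_{z=0} = b T f - T(bf) = [b,T] f$. Making this rigorous via Cauchy's integral formula, for any admissible radius $r > 0$ on the circle of which the family is uniformly bounded, one obtains
\begin{equation*}
  [b,T] f = \frac{1}{2\pi r} \int_0^{2\pi} e^{-\img\theta}\, T_{re^{\img\theta}} f \, \ud\theta,
\end{equation*}
and hence, by Minkowski's integral inequality,
\begin{equation*}
  \Norm{[b,T]f}{L^{p_1}(\tilde w)} \le \frac{1}{r} \sup_{|z|=r} \Norm{T_z f}{L^{p_1}(\tilde w)}.
\end{equation*}

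Next I rewrite $\Norm{T_z f}{L^{p_1}(\tilde w)} = \Norm{T(e^{-zb} f)}{L^{p_1}(w_z)}$, where $w_z := \tilde w \, e^{p_1 \Re(zb)}$. The hypothesis of Theorem~\ref{thm:RdF} then yields
\begin{equation*}
  \Norm{T_z f}{L^{p_1}(\tilde w)} \le \phi\bigl([w_z]_{A_{p_1/\lambda}}\bigr) \, \Norm{f}{L^{p_1}(\tilde w)}
\end{equation*}
with $\phi$ the increasing function guaranteed there. The technical heart of the argument is to show that, once $r$ is chosen small enough relative to $\Norm{b}{\BMO}$ and $[\tilde w]_{A_{p_1/\lambda}}$, the perturbed weight $w_z$ lies in $A_{p_1/\lambda}$ uniformly for $|z|=r$, with $[w_z]_{A_{p_1/\lambda}}$ bounded by a constant multiple of $[\tilde w]_{A_{p_1/\lambda}}$. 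This is a standard consequence of the John--Nirenberg inequality: for $b \in \BMO(\R^d)$ and $|s|\Norm{b}{\BMO}$ sufficiently small, $e^{sb}$ enjoys uniform reverse-Hölder bounds on every cube, so both averages $\ave{w_z}_Q$ and $\ave{w_z^{-1/(p_1/\lambda - 1)}}_Q$ remain comparable to the corresponding averages of $\tilde w$ alone.

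The main obstacle I expect is precisely this quantitative weight-perturbation estimate: one must choose $r$ as a decreasing function of the product $[\tilde w]_{A_{p_1/\lambda}}\Norm{b}{\BMO}$ so that the circle $|z|=r$ stays inside the ``safe regime'' of John--Nirenberg, and then verify that the resulting bound $\phi([w_z]_{A_{p_1/\lambda}})/r$ is dominated by an increasing function of $[\tilde w]_{A_{p_1/\lambda}}$ alone (for fixed $b$). Beyond this point, the argument reduces to the Banach-space-valued holomorphy of $z \mapsto T_z f \in L^{p_1}(\tilde w)$ --- which follows by differentiating the power series of $e^{\pm zb}$ and using the already-proven uniform bound on $T_z$ --- combined with the Cauchy integral representation displayed above.
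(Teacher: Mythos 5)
The paper does not prove this statement itself; it cites it as \cite[Theorem 2.13]{ABKP}, remarking that the underlying mechanism is already implicit in the Coifman--Rochberg--Weiss conjugation argument of \cite{CRW} (with a sharp quantitative form in \cite{CPP}). Your sketch implements precisely that CRW conjugation scheme, so it matches the approach the paper points to.

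Two small points of care, both within the scope of a sketch. First, the claim that ``both averages $\ave{w_z}_Q$ and $\ave{w_z^{-1/(p_1/\lambda-1)}}_Q$ remain comparable to the corresponding averages of $\tilde w$'' is not literally true: the factor $e^{p_1\Re(z)\ave{b}_Q}$ can be huge. What is true, and what one actually needs, is that the \emph{product} defining $[w_z]_{A_{p_1/\lambda}}$ is controlled, because after splitting each average via H\"older with a reverse--H\"older exponent for $\tilde w$, the John--Nirenberg estimate controls $\ave{e^{t(b-\ave{b}_Q)}}_Q$ for small $t$, and the two stray factors $e^{\pm t\ave{b}_Q}$ cancel between the primal and dual averages. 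Second, to make the $L^{p_1}(\tilde w)$-valued holomorphy of $z\mapsto T_zf$ rigorous for genuinely unbounded $b\in\BMO$, the standard device is to first replace $b$ by truncations $b_N=\max(-N,\min(N,b))$, prove the bound uniformly in $N$ (note $\Norm{b_N}{\BMO}\lesssim\Norm{b}{\BMO}$), and pass to the limit; otherwise the power-series manipulations defining $T_z$ are only formal. With these adjustments your argument is correct and reproduces the result the paper invokes.
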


The statement in Theorem 2.13 of \cite{ABKP} is somewhat more general, but the above particular case is easily seen to be contained in it. In fact, the idea behind this general theorem is already implicit in the work of Coifman--Rochberg--Weiss \cite{CRW1976}, which deals with the case of Calder\'on--Zygmund operators.  See also \cite{CPP} for a sharp quantitative version, and \cite{BMMST} for an up-to-date treatment of this type of results.

A combination of Theorems \ref{thm:RdFcompact} and \ref{thm:commuBasic} immediately gives:

\begin{corollary}\label{cor:commuBasic}
Let $1\leq\lambda<p_1<\infty$, and let $T$ be a linear operator defined and bounded on $L^{p_1}(\tilde w)$ for all $\tilde w\in A_{p_1/\lambda}(\R^d)$, with the operator norm dominated by some increasing function of $[\tilde w]_{A_{p_1/\lambda}}$. Suppose moreover that the commutator $[b,T]$ is compact on $L^{p_1}(w_1)$ for some $b\in\BMO(\R^d)$ and some $w_1\in A_{p_1/\lambda}(\R^d)$. Then $[b,T]$ is compact for the same $b$, and for all $p\in(\lambda,\infty)$ and all $w\in A_{p/\lambda}(\R^d)$.
\end{corollary}

\begin{proof}
We verify the assumptions of Theorem \ref{thm:RdFcompact} for the numbers $\lambda,p_1$ and the weight $w_1$ appearing in the statement of the Corollary, and the operator $[b,T]$ in place of $T$. By Theorem \ref{thm:commuBasic}, $[b,T]$ is bounded on $L^{p_1}(\tilde w)$ for all $\tilde w\in A_{p_1/\lambda}(\R^d)$. By assumption, $[b,T]$ is compact on $L^{p_1}(w_1)$ for some $w_1\in A_{p_1/\lambda}(\R^d)$. Thus the assumptions, and hence the conclusion, of Theorem \ref{thm:RdFcompact} hold for the operator $[b,T]$ in place of $T$, and this is what we claimed.
\end{proof}

\section{Commutators of Calder\'on--Zygmund operators}\label{sec:comCZO}

The following result of Uchiyama \cite{Uchi}, based on a classical Fr\'echet--Kolmo\-gorov criterion for compactness in $L^p(\R^d)$, provides a concrete condition to verify the assumptions of Corollary \ref{cor:commuBasic}:

\begin{theorem}[\cite{Uchi}]\label{thm:Uchi}
Let $T$ be a Calder\'on--Zygmund operator that extends to a bounded operator on $L^2(\R^d)$. If $b\in\CMO(\R^d)$, then $[b,T]$ is compact on the unweighted $L^p(\R^d)$ for all $p\in(1,\infty)$.
\end{theorem}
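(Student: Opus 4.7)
The plan is to invoke the classical Fr\'echet--Kolmogorov characterisation of precompactness in $L^p(\R^d)$: a bounded set is precompact iff it is uniformly equicontinuous under translations ($\sup_{f}\|\tau_h f - f\|_{L^p}\to 0$ as $|h|\to 0$) and uniformly tight ($\sup_{f}\int_{|x|>R}|f|^p\to 0$ as $R\to\infty$). I would verify these for the image under $[b,T]$ of the unit ball of $L^p(\R^d)$.

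The first step is a reduction to $b \in C_c^\infty(\R^d)$. By the Coifman--Rochberg--Weiss bound (the specialisation of Theorem~\ref{thm:commuBasic} to $p_1=p$ and $\tilde w\equiv 1$), one has $\|[b_1-b_2,T]\|_{L^p\to L^p} \lesssim \|b_1-b_2\|_{\BMO}$, so $b\mapsto [b,T]$ is Lipschitz continuous from $\BMO(\R^d)$ into the space of bounded operators on $L^p(\R^d)$. Since the compact operators form a norm-closed subspace and $\CMO$ is by definition the $\BMO$-closure of $C_c^\infty$, it suffices to treat $b \in C_c^\infty$.

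Fix then such a $b$ with $\supp b \subset B(0,R_0)$ and Lipschitz constant $L$. Tightness is straightforward: for $|x|>2R_0$ we have $b(x)=0$, hence $[b,T]f(x) = -T(bf)(x) = -\int K(x,y) b(y) f(y) \ud y$, and since $|x-y|\sim |x|$ on the support of the integrand, the kernel size bound combined with H\"older's inequality gives $|[b,T]f(x)| \lesssim \|b\|_\infty |x|^{-d}\|f\|_{L^p}$, whose $p$-th power is integrable at infinity with tail $\to 0$ as $R\to \infty$.

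The main obstacle is equicontinuity. I would rely on the absolutely convergent commutator formula
$$[b,T]f(x) = \int (b(x)-b(y)) K(x,y) f(y) \ud y,$$
valid because $|b(x)-b(y)|\,|K(x,y)|\lesssim |x-y|^{1-d}$ is locally integrable, to rewrite $[b,T]f(x+h) - [b,T]f(x)$ as $(b(x+h)-b(x))\int K(x+h,y)f(y)\ud y$ plus $\int (b(x)-b(y))[K(x+h,y)-K(x,y)] f(y)\ud y$. The first summand has $L^p$-norm $\lesssim L|h|\,\|Tf\|_{L^p} \lesssim |h|\,\|f\|_{L^p}$ via the Lipschitz bound on $b$ and $L^p$-boundedness of $T$. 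In the second, I would split the $y$-integral into $|x-y|>2|h|$ and $|x-y|\leq 2|h|$: on the far region, the kernel H\"older bound $|K(x+h,y)-K(x,y)|\lesssim |h|^\delta/|x-y|^{d+\delta}$ together with $|b(x)-b(y)|\leq 2\|b\|_\infty$ on $\supp b$ yields an auxiliary kernel $L^1$-integrable in $y$ with total mass $O(|h|^\delta)$, hence a Young-type $L^p$-estimate $\lesssim |h|^\delta\|f\|_{L^p}$; on the near region, the Lipschitz estimate $|b(x)-b(y)|\leq L|x-y|\leq 2L|h|$ absorbs one power of the kernel singularity, leaving an effective size $|x-y|^{1-d}$ that is integrable over $B(0,2|h|)$ with mass $O(|h|)$ (the term centred at $x+h$ is reduced to the same form by a translation of variables). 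The delicate near-region bookkeeping, in particular making the non-convolution kernel centred at $x+h$ yield a uniform $L^p$-bound, is the only step requiring care beyond routine estimates; combining everything gives $\|[b,T]f(\cdot+h) - [b,T]f\|_{L^p} \lesssim (|h|+|h|^\delta)\|f\|_{L^p}$ uniformly in $f$, completing the verification.
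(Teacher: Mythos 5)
The paper does not prove Theorem~\ref{thm:Uchi}: it cites Uchiyama~\cite{Uchi} as a black box (and even flags that the proof there rests on the classical Fr\'echet--Kolmogorov criterion). So you are not deviating from ``the paper's proof''; you are reconstructing the cited one. Your overall strategy---reduce to $b\in C_c^\infty$ via $\|[b_1-b_2,T]\|_{L^p\to L^p}\lesssim\|b_1-b_2\|_{\BMO}$ and closedness of the compacts, then verify boundedness, tightness and $L^p$-equicontinuity---is indeed the standard route, and the reduction step and the tightness estimate are fine.

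There are, however, two genuine gaps in the equicontinuity argument, and you only half-acknowledge them. First, in the decomposition
\begin{equation*}
[b,T]f(x+h)-[b,T]f(x)=\bigl(b(x+h)-b(x)\bigr)\int K(x+h,y)f(y)\,\mathrm{d}y+\int\bigl(b(x)-b(y)\bigr)\bigl[K(x+h,y)-K(x,y)\bigr]f(y)\,\mathrm{d}y,
\end{equation*}
the factored-out integral $\int K(x+h,y)f(y)\,\mathrm{d}y$ is \emph{not} absolutely convergent (the singularity $|x+h-y|^{-d}$ is non-integrable at $y=x+h$); the absolutely convergent commutator formula you start from owes its convergence precisely to the Lipschitz factor $b(x)-b(y)$, which you have just stripped off. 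To identify this term with $(b(x+h)-b(x))\,Tf(x+h)$ one must interpret the integral as a principal value and invoke the a.e.\ convergence of truncated singular integrals (Cotlar's inequality plus density), together with the observation that the possible discrepancy between the p.v.\ limit and $Tf$ is a pointwise multiplier that cancels in the commutator. None of this is in the sketch.

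Second, and more seriously, the near-region bound for the $K(x+h,\cdot)$ piece is simply false as written. On $\{|x-y|\le 2|h|\}$ you use $|b(x)-b(y)|\le 2L|h|$ and $|K(x+h,y)|\lesssim|x+h-y|^{-d}$, giving $|h|\,|x+h-y|^{-d}$; but the ball $\{|x-y|\le 2|h|\}$ contains the singular point $y=x+h$, so $\int_{|x-y|\le 2|h|}|x+h-y|^{-d}\,\mathrm{d}y=\infty$. Your parenthetical ``reduced to the same form by a translation of variables'' does not help: translating $y\mapsto y+h$ turns $K(x+h,y+h)$ into a different (non-convolution) kernel, not $K(x,y)$. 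The standard repair is to compare $b$ at $x+h$ rather than at $x$ in this piece, writing $b(x)-b(y)=\bigl(b(x)-b(x+h)\bigr)+\bigl(b(x+h)-b(y)\bigr)$; the second summand pairs correctly with $K(x+h,y)$ to give $|x+h-y|^{1-d}$ (integrable, mass $O(|h|)$), while the first summand has size $\lesssim L|h|$ and leaves behind a \emph{truncated} singular integral whose uniform $L^p$-boundedness again requires a Cotlar-type estimate. As it stands, the ``combining everything gives $\lesssim(|h|+|h|^\delta)\|f\|_{L^p}$'' conclusion is unearned.
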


Our next application of Theorem \ref{thm:RdFcompact} (via Corollary \ref{cor:commuBasic}) is a quick proof of the following result of  Clop--Cruz \cite{ClopCruz}, which they used to obtain weighted estimates for Beltrami equations. This result has also inspired a fair number of follow-up works dealing with the compactness of commutators in different settings, and we refer the reader to the several papers citing \cite{ClopCruz} for this.

\begin{corollary}[\cite{ClopCruz}, Theorem 2]\label{cor:ClopCruz}
Let $b\in\CMO(\R^d)$, and let $T$ be a Calder\'on\\--Zygmund operator that extends boundedly to $L^2(\R^d)$. Then the commutator $[b,T]$ is compact on $L^p(w)$ for all $p\in(1,\infty)$ and all $w\in A_p(\R^d)$.
\end{corollary}

\begin{proof}
Let us fix some $p_1\in(1,\infty)$ (any choice will do) for which we verify the assumptions of Corollary \ref{cor:commuBasic} with $\lambda=1$.
By Theorem \ref{thm:CF}, $T$ extends to a bounded operator on $L^{p_1}(\tilde w)$ for all $\tilde w\in A_{p_1}(\R^d)$.
By Theorem \ref{thm:Uchi}, $[b,T]$ is a compact operator on $L^{p_1}(\R^d)=L^{p_1}(w_1)$ with $w_1\equiv 1\in A_{p_1}(\R^d)$.
Thus Corollary \ref{cor:commuBasic} applies to give the compactness of $[b,T]$ on $L^p(w)$ for all $p\in(1,\infty)$ and all $w\in A_p(\R^d)$.
\end{proof}

The original proof in \cite{ClopCruz} relied on finding and verifying a weighted analogue of the classical (unweighted) Fr\'echet--Kolmogorov criterion, providing a sufficient condition for compactness in $L^p(w)$. This is avoided by the soft argument above.

\begin{remark}\label{rmk:pseudodiff}
Corollary \ref{cor:ClopCruz} is in particular valid for {\em pseudo-differential operators} with symbol of class $S^0_{1,0}$, namely (denoting by $\hat f$ the Fourier transform of $f$)
\begin{equation*}
  Tf(x)=\int_{\R^d}\sigma(x,\xi)\hat f(\xi)e^{i2\pi x\cdot\xi}\ud\xi,\quad
  \abs{\partial_x^\alpha\partial_\xi^\beta\sigma(x,\xi)}\lesssim(1+\abs{\xi})^{-\abs{\beta}}\ \forall\alpha,\beta\in\N^d,
\end{equation*}
as these are instances of Calder\'on--Zygmund operators by Th\'eor\`eme 19 in \cite{CM:audela}. For these $T$, Corollary \ref{cor:ClopCruz} is also obtained by Guo--Zhou \cite{GZ} as a corollary to Theorem 2.1 in \cite{GZ}, which allows a certain larger weight class $A_p(\varphi)$ introduced by Tang \cite{Tang}. Recovering Theorem 2.1 in \cite{GZ} by our approach requires its extension to $A_p(\varphi)$ in place of $A_p(\R^d)$. We refer to Section \ref{sec:Ap(varphi) weights} for detailed results concerning extrapolation with $A_p(\varphi)$ weights.
\end{remark}

\section{Commutators of rough singular integrals}\label{sec:RSIO}
Let us now consider
\begin{equation*}
  T_{\Omega}f(x):=\lim_{\eps\to 0}\int_{\abs{x-y}>\eps}\frac{\Omega(x-y)}{\abs{x-y}^d}f(y)\ud y,
\end{equation*}
where $\Omega$ is homogeneous of order zero, and integrable with vanishing mean on the unit sphere $S^{d-1}$. There are the following analogues of Theorems \ref{thm:CF} and \ref{thm:Uchi}:

\begin{theorem}[\cite{Duo:TAMS,Watson}]\label{thm:DW}
Let $r\in(1,\infty)$ and let $\Omega\in L^r(S^{d-1})$  be homogeneous of order zero with vanishing mean on $S^{d-1}$. Then $T_\Omega$ extends to a bounded operator on $L^p(w)$ for all $p\in(r',\infty)$ and all $w\in A_{p/r'}(\R^d)$.
\end{theorem}

Here and below, $r':=r/(r-1)$ denotes the conjugate exponent.

\begin{theorem}[\cite{ChenHu,GuoHu:15}]\label{thm:Chinese}
Let $\Omega$ be homogeneous of order zero, and integrable with vanishing mean on $S^{d-1}$. Let $b\in\CMO(\R^d)$. Then  the commutator $[b,T_\Omega]$ acts as a compact operator
\begin{enumerate}
  \item\label{it:ChenHu} on the unweighted $L^p(\R^d)$ for all $p\in(\theta',\theta)$, provided that $\theta>2$ and
\begin{equation*}
  \sup_{\zeta\in S^{d-1}}\int_{S^{d-1}}\abs{\Omega(\eta)}\Big(\log\frac{1}{\abs{\zeta\cdot\eta}}\Big)^\theta\ud\eta<\infty
  \qquad\text{\cite{ChenHu}};
\end{equation*}
  \item\label{it:GuoHu}  on the power-weighted $L^p(w_\gamma)$ for all $p\in(1,\infty)$ and $\gamma\in(-1,p-1)$, where $w_\gamma(x)=\abs{x}^\gamma$, provided that $\Omega\in L(\log L)^2(S^{d-1})$ \cite{GuoHu:15}.
\end{enumerate}
\end{theorem}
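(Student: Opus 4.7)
The plan is a classical two-step argument: first reduce from $\CMO(\R^d)$ to $b\in C_c^\infty(\R^d)$ via $\BMO$-approximation, then verify compactness for smooth compactly supported $b$ by the Fréchet-Kolmogorov criterion.

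\emph{Step one.} By Theorem~\ref{thm:commuBasic} applied to $T_\Omega$ (whose weighted boundedness on the relevant spaces is supplied by Theorem~\ref{thm:DW}), the assignment $b\mapsto[b,T_\Omega]$ is bounded from $\BMO(\R^d)$ into $\bddlin(L^p(w))$ for the target spaces in each case. Since $\CMO(\R^d)=\overline{C_c^\infty(\R^d)}^{\BMO}$ and the compact operators form a closed subspace of $\bddlin(L^p(w))$, the compactness of $[b,T_\Omega]$ for every $b\in\CMO$ reduces to compactness for each $b\in C_c^\infty$.

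\emph{Step two.} For such $b$, let $\mathcal K:=\{[b,T_\Omega]f:\Norm{f}{L^p(w)}\le 1\}$; precompactness of $\mathcal K$ is obtained from a Fréchet-Kolmogorov criterion (a weighted version, for part (2), which is available for power weights). One must verify: (i) uniform $L^p(w)$-boundedness, immediate from step one; (ii) uniform decay at infinity $\Norm{[b,T_\Omega]f\cdot\mathbf 1_{\{|x|>R\}}}{L^p(w)}\to 0$ as $R\to\infty$, which follows because $b$ has compact support and the piece $T_\Omega(bf)(x)$ decays in $|x|$ by the homogeneous kernel bound; and (iii) uniform equicontinuity $\Norm{\tau_h[b,T_\Omega]f-[b,T_\Omega]f}{L^p(w)}\to 0$ as $|h|\to 0$.

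\emph{Main obstacle.} The delicate step is (iii): for a smooth kernel this is mean-value-theorem folklore, but for rough $\Omega$ one must decompose $\Omega$ into pieces $\Omega_k$ of varying regularity---say via spherical harmonics, or an $L\log L$-atomic decomposition---bound the translation modulus $\Norm{\tau_h T_{\Omega_k}-T_{\Omega_k}}{L^p(w)\to L^p(w)}$ on each piece with a quantitative loss in~$k$, and sum. The log-integrability hypothesis in~(1) and the $L(\log L)^2$ hypothesis in~(2) are calibrated so that exactly this summation converges. A tempting shortcut via the main theorem of this paper would be: once (1) is established at some $p_1\in(\theta',\theta)$, feed it into Theorem~\ref{thm:RdFcompact} to obtain weighted compactness automatically. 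The catch is that Theorem~\ref{thm:DW} only supplies boundedness of $T_\Omega$ on the class $A_{p_1/r'}$ when $\Omega\in L^r$, which is strictly smaller than the $A_{p_1}$ class that contains the power weights of~(2) unless $\Omega\in L^\infty$. So for genuinely rough $\Omega$ the extrapolation alone does not reach the full power-weight range of~(2), and the rough-kernel harmonic analysis of \cite{ChenHu,GuoHu:15} genuinely remains necessary at the base step.
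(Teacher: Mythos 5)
This statement is not proved in the paper; it is quoted from \cite{ChenHu,GuoHu:15} and invoked as a black box in the proof of Corollary~\ref{cor:Chinese}. So the only question is whether your sketch holds up on its own, and it does not. The first gap is already in Step one: you invoke Theorem~\ref{thm:commuBasic} (which requires boundedness of $T_\Omega$ on $L^{p_1}(\tilde w)$ for \emph{all} $\tilde w$ in the relevant $A_{p_1/\lambda}$ class), and for that you cite Theorem~\ref{thm:DW}. But Theorem~\ref{thm:DW} assumes $\Omega\in L^r(S^{d-1})$ with $r>1$, which is \emph{not} implied by either hypothesis of Theorem~\ref{thm:Chinese}: the log-integrability condition in (1) and $\Omega\in L(\log L)^2(S^{d-1})$ in (2) are both genuinely weaker than membership in any $L^r$, $r>1$. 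So the boundedness of $T_\Omega$ and of $[b,T_\Omega]$ on the stated target spaces --- unweighted $L^p$ with $p\in(\theta',\theta)$ in case (1), power-weighted $L^p(w_\gamma)$ in case (2) --- is itself a nontrivial rough-kernel result that must be cited or proved separately; it is not available from the theorems this paper quotes. Without it, the closedness-of-compacts reduction from $\CMO$ to $C_c^\infty$ does not get off the ground.

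The second gap you acknowledge yourself: the entire analytic content of Step two, namely the translation-modulus estimate for the rough kernel and the summation allegedly ``calibrated'' to the log-integrability or $L(\log L)^2$ hypotheses, is left unexecuted. That is precisely the hard part of \cite{ChenHu,GuoHu:15}. Saying that the hypotheses are ``calibrated so that exactly this summation converges'' describes a plan, not a proof; the decomposition of $\Omega$ (spherical harmonics, $L\log L$-atoms, or dyadic truncations), the quantitative loss per piece, and the convergence of the resulting series are exactly the computations that would need to be produced. Your high-level plan --- approximate $b$ in $\BMO$, then apply a Fr\'echet--Kolmogorov criterion --- does match the general strategy of those papers, but as written both the reduction step and the core equicontinuity estimate rest on inputs that are neither supplied nor correctly sourced.
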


A combination of Corollary \ref{cor:commuBasic} and Theorem \ref{thm:DW}, together with either version \eqref{it:ChenHu} or \eqref{it:GuoHu} of Theorem \ref{thm:Chinese}, gives a recent result of Guo--Hu \cite{GuoHu:16}:

\begin{corollary}[\cite{GuoHu:16}]\label{cor:Chinese}
Let $r\in(1,\infty)$ and let $\Omega\in L^r(S^{d-1})$ be homogeneous of order zero with vanishing mean on $S^{d-1}$. Let $b\in\CMO(\R^d)$. Then the commutator $[b,T_\Omega]$ is compact on $L^p(w)$ for all $p\in(r',\infty)$ and all $w\in A_{p/r'}(\R^d)$. 
\end{corollary}

\begin{remark}
Both Theorem \ref{thm:DW} and Corollary \ref{cor:Chinese} also hold for $r=\infty$: Of course $L^\infty(S^{d-1})\subset L^r(S^{d-1})$ for all $r\in(1,\infty)$, while if $w\in A_p(\R^d)$, then also $w\in A_{p/r'}(\R^d)$ for large $r\in(1,\infty)$ by the openness of the $A_p(\R^d)$ condition from \cite{CF}. See \cite{HRT} for a quantitative version of Theorem \ref{thm:DW} with $r=\infty$.
\end{remark}

\begin{proof}[Proof of Corollary \ref{cor:Chinese}]
We verify the assumptions of Corollary \ref{cor:commuBasic} with $\lambda=r'$, a suitable $p_1\in(r',\infty)$ to be specified shortly, and $w_1\equiv 1\in A_{p_1/r'}(\R^d)$. It is clear that $\Omega\in L^r(S^{d-1})$ satisfies both conditions \eqref{it:ChenHu} (with any $\theta\in(2,\infty)$) and \eqref{it:GuoHu} of Theorem \ref{thm:Chinese}. If we wish to apply \eqref{it:ChenHu}, we choose $\theta$ sufficiently large so that $(\theta',\theta)\cap(r',\infty)\neq\varnothing$, and then we pick $p_1$ from this intersection. If we wish to apply \eqref{it:GuoHu} instead, then we are free to pick any $p_1\in(1,\infty)$. In either case, the relevant version of Theorem \ref{thm:Chinese} guarantees that $[b,T_\Omega]$ is compact on $L^{p_1}(w_1)$ for the particular exponent $p_1\in(r',\infty)$ and weight $w_1\equiv 1\in A_{p_1/r'}(\R^d)$; in case \eqref{it:GuoHu}, we use $\gamma=0$. On the other hand, a direct application of Theorem \ref{thm:DW} shows that $T_\Omega$ is bounded on $L^{p_1}(\tilde w)$ for all $\tilde w\in A_{p_1/r'}(\R^d)$. Thus Corollary \ref{cor:commuBasic} applies to give the compactness of $[b,T_\Omega]$ on $L^p(w)$ for all $p\in(r',\infty)$ and all $w\in A_{p/r'}(\R^d)$.
\end{proof}

\section{Commutators of fractional integral operators}\label{comm. fr. int. op.}

In this section we will apply Theorem \ref{thm:Off-dig.extrp.compact} to the commutator $[b,I_\alpha]$, where for $0<\alpha<d$, the fractional integral operator or Riesz potential $I_\alpha$ is defined by
\begin{equation*}
  I_\alpha f(x)=\int_{\R^d}\frac{f(y)}{|x-y|^{d-\alpha}}dy.
\end{equation*}
The weighted norm inequalities for $I_\alpha$ were obtained by Muckenhoupt--Wheeden \cite{MW:74} and the sharp behavior in terms of the weight constants by Lacey--Moen--P\'erez--Torres \cite{Lacey2010}. The commutators of fractional integral operators and $\BMO$ functions were first studied by Chanillo \cite{SC1982}. In \cite{ST:91}, Segovia--Torrea obtained the following weighted commutator result (see Cruz-Uribe and Moen \cite{CM2012} for a sharp quantitative version):

\begin{theorem}[\cite{ST:91}]\label{thm:linear-fractional}
Fix $0<\alpha<d$, $1<p<d/\alpha$, and  $1/p-1/q=\alpha/d$. Suppose also that $b\in\BMO(\R^d)$. Then $[b,I_\alpha]:L^p(w^p)\to L^q(w^q)$ is a bounded linear operator for all $w\in A_{p,q}(\R^d)$.
\end{theorem}

For the application of our Theorem \ref{thm:Off-dig.extrp.compact} we need the following result of Wang \cite{W:58} about the compactness of the commutator $[b,I_\alpha]$:

\begin{theorem}[\cite{W:58}]\label{thm:compact-fractional}
If $b\in\CMO(\R^d)$, then $[b,I_\alpha]:L^p(\R^d)\to L^q(\R^d)$ is a compact operator, where $0<\alpha<d$, $1<p<d/\alpha$, and  $1/p-1/q=\alpha/d$.
\end{theorem}

A combination of Theorems \ref{thm:Off-dig.extrp.compact}, \ref{thm:linear-fractional} and \ref{thm:compact-fractional} immediately gives a quick proof of the following recent result of Wu--Yang \cite{WY:2018}:

\begin{corollary}[\cite{WY:2018}, Theorem 1.3]
Let $\alpha\in(0,d),p,q\in(1,\infty)$ with $\frac{1}{p}=\frac{1}{q}+\frac{\alpha}{d},w\in A_{p,q}(\R^d)$ and $b\in\CMO(\R^d)$. Then the commutator $[b,I_\alpha]$ is compact from $L^p(w^p)$ to $L^q(w^q)$.
\end{corollary}

\begin{proof}
Let us fix some $p_1,q_1\in(1,\infty)$ for which we verify the assumptions of Theorem \ref{thm:Off-dig.extrp.compact} for $[b,I_\alpha]$ in place of $T$: By Theorem \ref{thm:linear-fractional}, $[b,I_\alpha]$ is a bounded operator from $L^{p_1}(\tilde w^{p_1})$ to $L^{q_1}(\tilde w^{q_1})$ for all $1<p_1\leq q_1<\infty$ such that $\frac{1}{p}-\frac{1}{q}=\frac{1}{p_1}-\frac{1}{q_1}=\frac{\alpha}{d}$ and all $\tilde w\in A_{p_1,q_1}(\R^d)$. By Theorem \ref{thm:compact-fractional}, $[b,I_\alpha]$ is a compact operator from $L^{p_1}(\R^d)=L^{p_1}(w_1^{p_1})$ to $L^{q_1}(\R^d)=L^{q_1}(w_1^{q_1})$ with $w_1\equiv 1\in A_{p_1,q_1}(\R^d)$. Thus the assumptions, and hence the conclusion, of Theorem \ref{thm:Off-dig.extrp.compact} hold for the operator $[b,I_\alpha]$ in place of $T$, and this is what we wanted.
\end{proof}

As in the case of the commutators of Calder\'on--Zygmund operators in Section \ref{sec:comCZO} the original proof in \cite{WY:2018} relied on verifying the weighted Fr\'echet--Kolmog\-orov criterion \cite{ClopCruz}. This is avoided by the aforementioned argument.

Consider now, for $\alpha\in(0,d)$, the so-called {\em $\rho$-type fractional integral operator} defined by
\begin{equation*}
  T_{K_{\alpha}}f(x)=\int_{\R^d}K_{\alpha}(x,y)f(y)dy,\qquad x\notin\supp f,
\end{equation*}
with kernel $K_{\alpha}$ satisfying the size condition
\begin{equation*}
  |K_\alpha(x,y)|\lesssim\frac{1}{|x-y|^{d-\alpha}},
\end{equation*}
and the smoothness condition
\begin{equation*}
  |K_{\alpha}(x,y)-K_{\alpha}(z,y)|+|K_{\alpha}(y,x)-K_{\alpha}(y,z)|\leq\rho\bigg(\frac{|x-z|}{|x-y|}\bigg)\frac{1}{|x-y|^{d-\alpha}},
\end{equation*}
for all $x,z,y\in\R^d$ such that $|x-y|>2|x-z|$, where $\rho:[0,1]\to[0,\infty)$ is a modulus of continuity, that is, $\rho$ is a continuous, increasing, subadditive function with $\rho(0)=0$ and satisfies the following Dini condition:
\begin{equation*}
  \int_{0}^{1}\rho(t)\frac{dt}{t}<\infty.
\end{equation*}

By observing that $\abs{T_{K_\alpha}(f)}\lesssim I_{\alpha}(|f|)$ and applying the result of Muckenho\-upt and Wheeden \cite{MW:74} to the operator $I_{\alpha}(|f|)$ we have that $T_{K_\alpha}$ is bounded from $L^p(w^p)$ to $L^q(w^q)$ for all $1<p\leq q<\infty$ such that $\frac{1}{p}-\frac{1}{q}=\frac{\alpha}{d}$ and all weights $w\in A_{p,q}(\R^d)$. We extend this result to the commutator $[b,T_{K_\alpha}]$ by recalling the following result of B\'enyi--Martell--Moen--Stachura--Torres \cite{BMMST} (this is a generalized version of the classical theorem of Coifman--Rochberg--Weiss \cite{CRW1976}):

\begin{theorem}[\cite{BMMST}, Theorem 3.22]\label{thm:commutators lim. range}
Let $T$ be a linear operator. Fix $1\leq p, q<\infty$. Suppose also that $T:L^p(w^p)\to L^q(w^q)$ is bounded for all $w\in A_{p,q}(\R^d)$ and $b\in\BMO(\R^d)$. Then $[b,T]$ is a bounded operator from $L^p(w^p)$ to $L^q(w^q)$.
\end{theorem}

By applying Theorem \ref{thm:commutators lim. range} to the operator $T_{K_\alpha}$ in place of $T$, the following weighted boundedness result for the commutator $[b,T_{K_\alpha}]$ is automatically valid:

\begin{corollary}\label{coro:rho type fractional}
Fix $0<\alpha<d$, $1<p<d/\alpha$ and $1/p-1/q=\alpha/d$. Suppose also that $b\in\BMO(\R^d)$. Then $[b,T_{K_\alpha}]:L^p(w^p)\to L^q(w^q)$ is a bounded linear operator for all $w\in A_{p,q}(\R^d)$.
\end{corollary}

The compactness result about the commutator $[b,T_{K_\alpha}]$ is due to Guo--Wu--Yang \cite{GWY}:

\begin{theorem}[\cite{GWY}, Theorem 1.5]\label{thm:cmpt. of rho type fractional}
Let $w\in A_{p,q}(\R^d)$, $1<p,q<\infty$, $0<\alpha<d$, $1/q=1/p-\alpha/d$. If $b\in\CMO(\R^d)$, then $[b,T_{K_\alpha}]$ is a compact operator from $L^p(w^p)$ to $L^q(w^q)$.
\end{theorem}

The original proof of Theorem \ref{thm:cmpt. of rho type fractional} again follows by applying the weighted Fr\'echet--Kolmogorov criterion obtained in \cite{ClopCruz} and restated in Lemma 5.4 of \cite{GWY}. However, by only applying and verifying the unweighted Fr\'echet--Kolmogorov criterion the proof of Theorem \ref{thm:cmpt. of rho type fractional} can be simplified as follows:

\begin{proof}
Let us fix some $p_1,q_1\in(1,\infty)$ for which we verify the assumptions of Theorem \ref{thm:Off-dig.extrp.compact} for $[b,T_{K_\alpha}]$ in place of $T$: By Corollary \ref{coro:rho type fractional}, $[b,T_{K_\alpha}]$ is a bounded operator from $L^{p_1}(\tilde w^{p_1})$ to $L^{q_1}(\tilde w^{q_1})$ for all $1<p_1\leq q_1<\infty$ such that $\frac{1}{p}-\frac{1}{q}=\frac{1}{p_1}-\frac{1}{q_1}=\frac{\alpha}{d}$ and all $\tilde w\in A_{p_1,q_1}(\R^d)$.
By the unweighted version of Theorem 1.5 in \cite{GWY} (which in turn depends on the classical, unweighted version of the Fr\'echet--Kolmogorov criterion), $[b,T_{K_\alpha}]$ is a compact operator from $L^{p_1}(\R^d)=L^{p_1}({w_1}^{p_1})$ to $L^{q_1}(\R^d)=L^{q_1}({w_1}^{q_1})$ with $w_1\equiv 1\in A_{p_1,q_1}(\R^d)$. Thus the assumptions, and hence the conclusion of Theorem \ref{thm:Off-dig.extrp.compact} hold for the operator $[b,T_{K_\alpha}]$ in place of $T$, and this is what we wanted.
\end{proof}

\section{Commutators of Bochner--Riesz multipliers}\label{comm. br. mult.}
In this section we will apply Theorem \ref{thm:limited range extrp.compact} to the commutators of Bochner--Riesz multipliers in dimensions $d\ge 2$. The latter is a Fourier multiplier $B^\kappa$ with the symbol $(1-|\xi|^2)_{+}^{\kappa}$, where $\kappa>0$ and $t_{+}=\max(t,0)$. That is, the Bochner--Riesz operator is defined, on the class $\mathcal S(\R^d)$ of Schwartz functions, by
\begin{equation*}
  \widehat {B^\kappa f} (\xi)=(1-|\xi|^2)_{+}^{\kappa}\widehat f(\xi),
\end{equation*}
where $\widehat f$ denotes the Fourier transform of $f$.

The following Bochner--Riesz conjecture is well-known:

\begin{conjecture}[Bochner--Riesz Conjecture]\label{j:BR} 
For $0<\kappa<\frac{d-1}{2}$, there holds that $B^\kappa: L ^{p}(\R ^{d})\mapsto L ^{p}(\R^{d})$ if 
\begin{equation*}
  p\in\bigg(\frac{2d}{d+1+2\kappa},\frac{2d}{d-1-2\kappa}\bigg). 
\end{equation*}
\end{conjecture}

This conjecture holds in two dimensions, as was proved by Carleson--Sj\"olin \cite{CS} (see also C\'ordoba \cite{Cordoba1979}). In the case $d\ge 3$, the best results are currently due to Bourgain--Guth \cite{BG}, but also see Lee \cite{Lee}.

In \cite{LMR2019}, an equivalent form of the Bochner--Riesz Conjecture \ref{j:BR} is stated as follows:
\begin{conjecture}\label{conj.}
Let $\mathbf 1_{[-1/4,1/4]}\leq\chi\leq\mathbf 1_{[-1/2,1/2]}$ be a Schwartz function and denote by $S_{\tau}$ the Fourier multiplier with symbol  $\chi((|\xi|-1)/\tau)$. If $\frac{2d}{d+1}<p<\frac{2d}{d-1}$, then 
\begin{equation}\label{eq:conjecture}
  \|S_{\tau}\|_{L^p(\R^d)\mapsto L^p(\R^d)}\leq C_{\epsilon}\tau^{-\epsilon},
\end{equation}
where $0<\tau<1$ and $C_{\epsilon}$ is a constant that depends on $0<\epsilon<1$.
\end{conjecture}

The connection between the Bochner--Riesz and the $S_{\tau}$ Fourier multipliers is well-known and it can be found in \cite{Carbery, Cordoba1977, Cordoba1979} and \cite[Chapter 8.5]{D2001}. We briefly recall it here. For each $0<\kappa<\frac{d-1}{2}$, we have
\begin{equation*}
  B^\kappa=T^0+\sum_{i=1}^{\infty }2^{-i\kappa}\operatorname{Dil}_{1-2^{-i}}S_{2^{-i}}, 
\end{equation*}
where $T^0$ is a Fourier multiplier, with the multiplier being a Schwartz function supported near the origin and the operator $\operatorname{Dil}_s f(x)=f(x/s)$ is a dilation operator. 
Moreover, each $S_{2^{-i}}$ is a Fourier multiplier with symbol $\chi_i (2^{i}\big||\xi|-1\big|)$, where the $\chi_i$ satisfy a uniform class of derivative estimates.

The partial knowledge of the range of exponents (which depends on the parameter $1<p_0<2$) such that the estimate \eqref{eq:conjecture} of Conjecture \ref{conj.} holds is used in the following theorem of Lacey--Mena--Reguera \cite{LMR2019}:

\begin{theorem}[\cite{LMR2019}, Theorem 6.1]\label{thm:Bochner-Riesz limited range estimate}
If $d=2$, $0<\kappa<\tilde\kappa<\frac{1}{2}$ and $p\in(\frac{4}{1+6\kappa},\frac{4}{1-2\kappa})$, then $B^{\tilde\kappa}$ is bounded on $L^p(w)$ for all $w\in A_{\frac{p(1+6\kappa)}{4}}(\R^2)\cap RH_{\big(\frac{4}{p(1-2\kappa)}\big)'}(\R^2)$.
\newline Moreover, if $d\ge 3$, $0<\kappa<\tilde\kappa<\frac{d-1}{2}$ and $1<p_0<2$ are such that the estimate \eqref{eq:conjecture} of Conjecture \ref{conj.} holds, and 
\begin{equation*}
  p\in\bigg(\frac{p_0(d-1)}{d-1+2\kappa(p_0-1)},\frac{p_0(d-1)}{d-1-2\kappa}\bigg),
\end{equation*}
then $B^{\tilde\kappa}$ is bounded on $L^p(w)$ for all
\begin{equation*}
  w\in A_{\frac{p(d-1+2\kappa(p_0-1))}{p_0(d-1)}}(\R^d)\cap RH_{\big(\frac{p_0(d-1)}{p(d-1-2\kappa)}\big)'}(\R^d).
\end{equation*}
\end{theorem}

Some earlier results in the same direction are contained in \cite{BBL}, \cite{CDL} and \cite{Christ}.

To streamline the presentation of our main result in this section about the compactness of commutators of Bochner--Riesz multipliers, we formulate the following Corollary of Theorem \ref{thm:Bochner-Riesz limited range estimate}:

\begin{corollary}\label{coro:Bochner-Riesz limited range estimate}
If $d=2$, $0<\tilde\kappa<\frac{1}{2}$ and $p\in(\frac{4}{1+6\tilde\kappa},\frac{4}{1-2\tilde\kappa})$, then $B^{\tilde\kappa}$ is bounded on $L^p(w)$ for all $w\in A_{\frac{p(1+6\tilde\kappa)}{4}}(\R^2)\cap RH_{\big(\frac{4}{p(1-2\tilde\kappa)}\big)'}(\R^2)$.
\newline Moreover, if $d\ge 3$, $0<\tilde\kappa<\frac{d-1}{2}$ and $1<p_0<2$ are such that the estimate \eqref{eq:conjecture} of Conjecture \ref{conj.} holds, and 
\begin{equation*}
  p\in\bigg(\frac{p_0(d-1)}{d-1+2\tilde\kappa(p_0-1)},\frac{p_0(d-1)}{d-1-2\tilde\kappa}\bigg),
\end{equation*}
then $B^{\tilde\kappa}$ is bounded on $L^p(w)$ for all
\begin{equation*}
  w\in A_{\frac{p(d-1+2\tilde\kappa(p_0-1))}{p_0(d-1)}}(\R^d)\cap RH_{\big(\frac{p_0(d-1)}{p(d-1-2\tilde\kappa)}\big)'}(\R^d).
\end{equation*} 
\end{corollary}

\begin{proof}
Let us fix $\tilde\kappa, p$ and the weight $w$ of our assumptions. For each selection of these fixed values we show that we can choose $\kappa$ sufficiently close to $\tilde\kappa$ (depending on $\tilde\kappa, p$ and the weight $w$) such that the assumptions of Theorem \ref{thm:Bochner-Riesz limited range estimate} are satisfied. By properties \eqref{weights prop. 2} and \eqref{weights prop. 3} of Proposition \ref{prop:weights} if
\begin{equation*}
  w\in A_{\frac{p(d-1+2\tilde\kappa(p_0-1))}{p_0(d-1)}}(\R^d)\cap RH_{\big(\frac{p_0(d-1)}{p(d-1-2\tilde\kappa)}\big)'}(\R^d)
\end{equation*}
then for $\kappa$ sufficiently close to $\tilde\kappa$ we also have that $\frac{p(d-1+2\kappa(p_0-1))}{p_0(d-1)}$ is sufficiently close to $\frac{p(d-1+2\tilde\kappa(p_0-1))}{p_0(d-1)}$ and $\big(\frac{p_0(d-1)}{p(d-1-2\kappa)}\big)'$ is sufficiently close to
\\$\big(\frac{p_0(d-1)}{p(d-1-2\tilde\kappa)}\big)'$ such that 
\begin{equation*}
  w\in A_{\frac{p(d-1+2\kappa(p_0-1))}{p_0(d-1)}}(\R^d)\cap RH_{\big(\frac{p_0(d-1)}{p(d-1-2\kappa)}\big)'}(\R^d).
\end{equation*}

By continuity, since 
\begin{equation*}
  p\in\bigg(\frac{p_0(d-1)}{d-1+2\tilde\kappa(p_0-1)},\frac{p_0(d-1)}{d-1-2\tilde\kappa}\bigg)
\end{equation*}
we also have that 
\begin{equation*}
  p\in\bigg(\frac{p_0(d-1)}{d-1+2\kappa(p_0-1)},\frac{p_0(d-1)}{d-1-2\kappa}\bigg),
\end{equation*}
provided that $\kappa$ is sufficiently close to $\tilde\kappa$.

Hence the assumptions of Theorem \ref{thm:Bochner-Riesz limited range estimate} are satisfied, and thus $B^{\tilde\kappa}$ is bounded on $L^p(w)$ for the arbitrary choice of the quantities $\tilde\kappa, p$ and $w$ in the statement of Corollary \ref{coro:Bochner-Riesz limited range estimate} that we considered. This concludes the proof.
\end{proof}

We extend this result to the commutator $[b,B^\kappa]$ by recalling the following corollary of Theorem \ref{thm:commutators lim. range} obtained in \cite{BMMST} (it follows by applying property \eqref{weights prop. 5} of Proposition \ref{prop:weights}):

\begin{corollary}[\cite{BMMST}, Corollary 5.3]\label{coro:restricted}
Let $1\leq p_{-}<p<p_{+}\leq\infty$, and let $T$ be a linear operator bounded on $L^{p}(w)$ for all $w\in A_{\frac{p}{p_{-}}}(\R^d)\cap RH_{\big(\frac{p_{+}}{p}\big)'}(\R^d)$. If $b\in\BMO(\R^d)$, then $[b,T]$ is bounded on $L^p(w)$ for all $w\in A_{\frac{p}{p_{-}}}(\R^d)\cap RH_{\big(\frac{p_{+}}{p}\big)'}(\R^d)$.
\end{corollary}

By applying Corollary \ref{coro:restricted} to the operator $B^\kappa$ in place of $T$, the following weighted boundedness for the commutator $[b,B^\kappa]$ holds:

\begin{corollary}\label{coro:commu. of Bochner-Riesz limited range estimate}
If $d=2$, $0<\kappa<\frac{1}{2}$, and $p\in(\frac{4}{1+6\kappa},\frac{4}{1-2\kappa})$, then for $b\in\BMO(\R^2)$, the commutator $[b,B^\kappa]$ is bounded on $L^p(w)$ for all $w\in A_{\frac{p(1+6\kappa)}{4}}(\R^2)\cap RH_{\big(\frac{4}{p(1-2\kappa)}\big)'}(\R^2)$.
\newline Moreover, if $d\ge 3$, $0<\kappa<\frac{d-1}{2}$ and $1<p_0<2$ are such that the estimate \eqref{eq:conjecture} of Conjecture \ref{conj.} holds and 
\begin{equation*}
  p\in\bigg(\frac{p_0(d-1)}{d-1+2\kappa(p_0-1)},\frac{p_0(d-1)}{d-1-2\kappa}\bigg),
\end{equation*}
then for $b\in\BMO(\R^d)$, the commutator $[b,B^\kappa]$ is bounded on $L^p(w)$ for all
\begin{equation*}
  w\in A_{\frac{p(d-1+2\kappa(p_0-1))}{p_0(d-1)}}(\R^d)\cap RH_{\big(\frac{p_0(d-1)}{p(d-1-2\kappa)}\big)'}(\R^d).
\end{equation*}
\end{corollary}

Moreover, an unweighted compactness result for the commutator $[b,B^\kappa]$ is due to Bu--Chen--Hu \cite{BCH}:

\begin{theorem}[\cite{BCH}, Theorems 1.1 and 1.2]\label{thm:unweighted compt. comm. of Bochner-Riesz limited range estimate}
If $d=2$, $0<\kappa<\frac{1}{2}$, and $p\in(\frac{4}{3+2\kappa},\frac{4}{1-2\kappa})$, then for $b\in\CMO(\R^2)$, the commutator $[b,B^\kappa]$ is compact on $L^p(\R^2)$.
\newline Let $d\ge3$, $\frac{d-1}{2d+2}<\kappa<\frac{d-1}{2}$, and $p\in(\frac{2d}{d+1+2\kappa},\frac{2d}{d-1-2\kappa})$. Then for $b\in\CMO(\R^d)$, the commutator $[b,B^\kappa]$ is compact on $L^p(\R^d)$.
\end{theorem}

Combining Theorem \ref{thm:limited range extrp.compact}, Corollary \ref{coro:commu. of Bochner-Riesz limited range estimate} and Theorem \ref{thm:unweighted compt. comm. of Bochner-Riesz limited range estimate} we can give a new weighted compactness result for the Bochner--Riesz commutator $[b,B^\kappa]$:

\begin{corollary}
If $d=2$, $0<\kappa<\frac{1}{2}$, and $p\in(\frac{4}{1+6\kappa},\frac{4}{1-2\kappa})$, then for $b\in\CMO(\R^2)$, the commutator $[b,B^\kappa]$ is compact on $L^p(w)$ for all $w\in A_{\frac{p(1+6\kappa)}{4}}(\R^2)\cap RH_{\big(\frac{4}{p(1-2\kappa)}\big)'}(\R^2)$.
\newline Moreover, if $d\ge 3$, $\frac{d-1}{2d+2}<\kappa<\frac{d-1}{2}$ and $1<p_0<2$ are such that the estimate \eqref{eq:conjecture} of Conjecture \ref{conj.} holds,
\begin{equation*}
  p\in\bigg(\frac{p_0(d-1)}{d-1+2\kappa(p_0-1)},\frac{p_0(d-1)}{d-1-2\kappa}\bigg),   
\end{equation*}
and $b\in\CMO(\R^d)$, then the commutator $[b,B^\kappa]$ is compact on $L^p(w)$ for all
\begin{equation*}
  w\in A_{\frac{p(d-1+2\kappa(p_0-1))}{p_0(d-1)}}(\R^d)\cap RH_{\big(\frac{p_0(d-1)}{p(d-1-2\kappa)}\big)'}(\R^d).
\end{equation*}
\end{corollary}

\begin{proof}
Let $d\ge 3$, $\frac{d-1}{2d+2}<\kappa<\frac{d-1}{2}$ and $p_0$ be as in the assumptions. We verify the assumptions of Theorem \ref{thm:limited range extrp.compact} for the fixed exponent
\begin{equation*}
  \frac{p_0(d-1)}{d-1+2\kappa(p_0-1)}<p_1<\frac{p_0(d-1)}{d-1-2\kappa}
\end{equation*}
and the operator $[b,B^\kappa]$ in place of $T$. By Corollary \ref{coro:commu. of Bochner-Riesz limited range estimate}, $[b,B^\kappa]$ is a bounded operator on $L^{p_1}(\tilde w)$ for all
\begin{equation*}
  \tilde w\in A_{\frac{p_1(d-1+2\kappa(p_0-1))}{p_0(d-1)}}(\R^d)\cap RH_{\big(\frac{p_0(d-1)}{p_1(d-1-2\kappa)}\big)'}(\R^d).
\end{equation*}
By Theorem \ref{thm:unweighted compt. comm. of Bochner-Riesz limited range estimate}, $[b,B^\kappa]$ is a compact operator on $L^{p_1}(\R^d)=L^{p_1}(w_1)$ with
\begin{equation*}
  w_1\equiv 1\in A_{\frac{p_1(d-1+2\kappa(p_0-1))}{p_0(d-1)}}(\R^d)\cap RH_{\big(\frac{p_0(d-1)}{p_1(d-1-2\kappa)}\big)'}(\R^d).
\end{equation*}
Thus Theorem \ref{thm:limited range extrp.compact} applies to give the compactness of $[b,B^\kappa]$ on $L^p(w)$ for all
\begin{equation*}
  p\in\bigg(\frac{p_0(d-1)}{d-1+2\kappa(p_0-1)},\frac{p_0(d-1)}{d-1-2\kappa}\bigg)
\end{equation*}
and all 
\begin{equation*}
  w\in A_{\frac{p(d-1+2\kappa(p_0-1))}{p_0(d-1)}}(\R^d)\cap RH_{\big(\frac{p_0(d-1)}{p(d-1-2\kappa)}\big)'}(\R^d).
\end{equation*}
The case $d=2$ follows in a similar way.
\end{proof}

\section{$A_p^\zeta(\varphi)$ weights and commutators of pseudo-differential operators}\label{sec:Ap(varphi) weights}

In this section, we develop and apply yet another variant for extrapolation of compactness for a special class of weights related to commutators of pseudo-differential operators with smooth symbols.

Following Wu--Wang \cite{WW:2018}, we consider the following:

\begin{definition}
A function $\varphi:[0,\infty)\to[1,\infty)$ is called {\em admissible} if it is non-decreasing and satisfies the following:
\begin{equation*}
  \varphi(\zeta t)\lesssim\zeta^{\omega}\varphi(t),
\end{equation*}
for all $\zeta\ge1$, $t\ge0$ and some $\omega>0$.
\end{definition}

\begin{definition}\label{def:Ap(varphi) weights}
Let $\varphi$ be an admissible function and let $p\in(1,\infty)$, $\zeta>0$. A weight $0<w\in L^1_{\loc}(\R^d)$ is called an $A_p^\zeta(\varphi)$ weight (or $w\in A_p^\zeta(\varphi)$) if
\begin{equation*}
  [w]_{A_p^\zeta(\varphi)}:=\sup_Q\frac{\ave{w}_Q\ave{w^{-\frac{1}{p-1}}}_Q^{p-1}}{\varphi(|Q|)^{\zeta p}}<\infty,
\end{equation*}
where the supremum is taken over all cubes $Q\subset\R^d$.
\end{definition}

\begin{remark}
In \cite{Tang}, Tang introduced the weight class $A_p(\varphi)$ which coincides with $A_p^1(\varphi)$. We remark that $A_p^\zeta(\varphi)=A_p(\varphi^\zeta)$. In general, it holds that $A_p(\R^d)\subset A_p^\zeta(\varphi)$ for all $1<p<\infty$. On the other hand, when $\varphi$ is a constant function, $A_p^\zeta(\varphi)=A_p(\R^d)$ for any $\zeta>0$. A main example of admissible function that we consider below is $\varphi(t)=1+t$.
\end{remark}

\subsection{Extrapolation with $A_p^\zeta(\varphi)$ weights}

In Theorem 2.1 of \cite{GZ}, Guo and Zhou proved the compactness of commutators of pseudo-differential operators with smooth symbols on weighted Lebesgue spaces where the weight functions belong to the weight class $A_p^\zeta(\varphi)$. Motivated by their work we show the following extrapolation of compactness:

\begin{theorem}\label{thm:Ap(varphi) compact}
Let $\varphi$ be an admissible function, let $1<p<\infty$, and let $T$ be a linear operator simultaneously defined and bounded on $L^{p}(w)$ for {\bf all} $1<p<\infty$, {\bf all} $w\in A_p^{\zeta}(\varphi)$ and {\bf all} $\zeta>0$. Suppose in addition that $T$ is compact on $L^{p_1}(w_1)$ for {\bf some} $1<p_1<\infty$, {\bf some} $w_1\in A_{p_1}^{\zeta_1}(\varphi)$ and {\bf some} $\zeta_1>0$. Then $T$ is compact on $L^p(w)$ for all $p\in (1,\infty)$, all $w\in A_p^{\zeta}(\varphi)$ and all $\zeta>0$.
\end{theorem}

We proceed by collecting the results from which the proof of Theorem \ref{thm:Ap(varphi) compact} follows. We will use Theorem \ref{thm:CwKa} in the special setting:

\begin{proposition}\label{prop:Ap(varphi)}
Let $\varphi$ be an admissible function and suppose that $q,q_1\in (1,\infty),\zeta,\zeta_1>0$, $v\in A_q^\zeta(\varphi)$ and $v\in A_{q_1}^{\zeta_1}(\varphi)$. Then
\begin{equation*}
  [L^{q_0}(v_0),L^{q_1}(v_1)]_{\gamma}=L^q(v)
\end{equation*}
for some $q_0\in(1,\infty), \zeta_0>0$, $v_0\in A_{q_0}^{\zeta_0}(\varphi)$, and $\gamma\in(0,1)$.
\end{proposition}

The only component of the proof of Theorem \ref{thm:Ap(varphi) compact} that requires actual computations is the verification of this proposition. For this purpose we will need Theorem \ref{thm:SW} which we connect with the $A_p^\zeta(\varphi)$ weights as follows:

\begin{lemma}\label{lem:main3}
Let $\varphi$ be an admissible function, and suppose that $p_1,p\in(1,\infty),\zeta,\zeta_1>0$, $w_1\in A_{p_1}^{\zeta_1}(\varphi)$ and $w\in A_p^{\zeta}(\varphi)$. Then there exist $p_0\in(1,\infty),\zeta_0>0$, $w_0\in A_{p_0}^{\zeta_0}(\varphi)$, and $\theta\in(0,1)$ such that the conclusion of Theorem \ref{thm:SW} holds, i.e.,
\begin{equation*}
  [L^{p_0}(w_0),L^{p_1}(w_1)]_\theta=L^p(w),
\end{equation*}
where
\begin{equation*}
  \frac{1}{p}=\frac{1-\theta}{p_0}+\frac{\theta}{p_1},\qquad
  w^{\frac{1}{p}}=w_0^{\frac{1-\theta}{p_0}}w_1^{\frac{\theta}{p_1}}.
\end{equation*}
\end{lemma}

\begin{proof}
Note that the choice of $\theta\in(0,1)$ determines both
\begin{equation*}
  p_0=p_0(\theta)=\frac{1-\theta}{\frac{1}{p}-\frac{\theta}{p_1}},\qquad
  w_0=w_0(\theta)=w^{\frac{p_0}{p(1-\theta)}}w_1^{-\frac{p_0\cdot\theta}{p_1(1-\theta)}},
\end{equation*}
so it remains to check that we can choose $\theta\in(0,1)$ so that $p_0\in(1,\infty)$ and $w_0\in A_{p_0}^{\zeta_0}(\varphi)$ for some $\zeta_0>0$. Since $p_0(0)=p\in(1,\infty)$, the first condition is obvious for small enough $\theta>0$ by continuity.

To check that $w_0\in A_{p_0}^{\zeta_0}(\varphi)$ for some $\zeta_0>0$, we consider a cube $Q$ and write
\begin{equation*}
\begin{split}
  &\ave{w_0}_Q \ave{w_0^{-\frac{1}{p_0-1}}}_Q^{p_0-1}
  =\ave{w^{\frac{p_0}{p(1-\theta)}}w_1^{-\frac{p_0\cdot\theta}{p_1(1-\theta)}}}_Q
  \ave{w^{-\frac{p_0'}{p(1-\theta)}}w_1^{\frac{p_0'\cdot\theta}{p_1(1-\theta)}}}_Q^{p_0-1}  \\
  &=\ave{w^{\frac{p_0}{p(1-\theta)}}(w_1^{-\frac{1}{p_1-1}})^{\frac{p_0\cdot\theta}{p_1'(1-\theta)}}}_Q
  \ave{(w^{-\frac{1}{p-1}})^{\frac{p_0'}{p'(1-\theta)}}w_1^{\frac{p_0'\cdot\theta}{p_1(1-\theta)}}}_Q^{p_0-1},
\end{split}
\end{equation*}
where $q':=q/(q-1)$ denotes the conjugate exponent of $q\in\{p,p_0,p_1\}$.

In the first average, we use H\"older's inequality with exponents $1+\eps^{\pm 1}$, and in the second with exponents $1+\delta^{\pm 1}$ to get
\begin{equation}\label{eq:beforeRHI3}
\begin{split}
  &\leq \ave{w^{\frac{p_0(1+\eps)}{p(1-\theta)}}}_Q^{\frac{1}{1+\eps}}  \ave{(w_1^{-\frac{1}{p_1-1}})^{\frac{p_0\theta(1+\eps)}{p_1'\eps(1-\theta)}}}_Q^{\frac{\eps}{1+\eps}}
  \ave{(w^{-\frac{1}{p-1}})^{\frac{p_0'(1+\delta)}{p'(1-\theta)}} }_Q^{\frac{p_0-1}{1+\delta}}  \\ 
  &\qquad\times\ave{ w_1^{\frac{p_0'\theta(1+\delta)}{p_1\delta(1-\theta)}}}_Q^{\frac{\delta(p_0-1)}{1+\delta}}  \\
  &=\ave{w^{r(\theta)}}_Q^{\frac{1}{1+\eps}}\ave{(w_1^{-\frac{1}{p_1-1}})^{s(\theta)}}_Q^{\frac{\eps}{1+\eps}}
  \ave{(w^{-\frac{1}{p-1}})^{t(\theta)} }_Q^{\frac{p_0-1}{1+\delta}}  \\
  &\qquad\times\ave{ w_1^{u(\theta)}}_Q^{\frac{\delta(p_0-1)}{1+\delta}},
\end{split}
\end{equation}
where
\begin{equation*}
  r(\theta):=\frac{p_0(\theta)(1+\eps)}{p(1-\theta)},\qquad
  s(\theta):=\frac{\theta p_0(\theta)(1+\eps)}{p_1'\eps(1-\theta)}
\end{equation*}
and
\begin{equation*}
  t(\theta):=\frac{p_0'(\theta)(1+\delta)}{p'(1-\theta)},\qquad
  u(\theta):=\frac{\theta p_0'(\theta)(1+\delta)}{p_1\delta(1-\theta)}.
\end{equation*}

Now, we choose $\eps=\frac{\theta p}{p_1'}$ and $\delta=\frac{\theta p'}{p_1}$ in a such a way that
\begin{equation*}
  r(\theta)=s(\theta)=\frac{p_0(\theta)(p_1'+\theta p)}{pp_1'(1-\theta)},
\end{equation*}
and
\begin{equation*}
  t(\theta)=u(\theta)=\frac{p_0(\theta)'(p_1+\theta p')}{p'p_1(1-\theta)}. 
\end{equation*}

The strategy to proceed is to use the reverse H\"older inequality for $A_v^{\tilde\zeta}(\varphi)$ weights due to Wu--Wang (see Proposition 15 in \cite{WW:2018}), which says that for each $W\in A_v^{\tilde\zeta}(\varphi)$ there exists $\eta>0$ such that
\begin{equation}\label{eq:RHI2}
  \ave{W^t}_Q^{1/t}\lesssim \ave{W}_{Q}\varphi(|Q|)^{\eta}
\end{equation}
for all $t\leq 1+\tilde\eta$ and for some $\tilde\eta>0$.

Recalling that $p_0(0)=p$, we see that $r(0)=t(0)=1$. By continuity, given any $\tilde\eta>0$, we find that 
\begin{equation}\label{eq:new weights}
  \max(r(\theta),t(\theta))\leq 1+\tilde\eta\,\,\,\text{for all small enough}\,\,\,\theta>0.
\end{equation}

Next we will apply another property of $A_v^{\tilde\zeta}(\varphi)$ weights as stated by Wu--Wang in Proposition 15 of \cite{WW:2018}, namely:
\newline If $1<v<\infty$, we have 
\begin{equation}\label{Ap(zeta)(phi) prop.}
  W\in A_v^{\tilde\zeta}(\varphi)\Longleftrightarrow W^{1-v'}\in A_{v'}^{\tilde\zeta}(\varphi),\qquad\text{where}\qquad\frac{1}{v}+\frac{1}{v'}=1.
\end{equation}

By \eqref{Ap(zeta)(phi) prop.} we have that $w\in A_p^\zeta(\varphi)$, $w^{-\frac{1}{p-1}}\in A_{p'}^\zeta(\varphi)$, $w_1\in A_{p_1}^{\zeta_1}(\varphi)$ and $w_1^{-\frac{1}{p_1-1}}\in A_{p_1'}^{\zeta_1}(\varphi)$. Hence by \eqref{eq:new weights} each of these four functions satisfies the reverse H\"older inequality \eqref{eq:RHI2} for all $t\leq 1+\tilde\eta$ and for some $\tilde\eta>0$. Thus, for all small enough $\theta>0$, we have
\begin{equation*}
\begin{split}
  \eqref{eq:beforeRHI3}
  &\lesssim \ave{w}_Q^{\frac{p_0}{p(1-\theta)}} \ave{w_1^{-\frac{1}{p_1-1}}}_Q^{\frac{\theta p_0}{p_1'(1-\theta)}}
  \ave{w^{-\frac{1}{p-1}} }_Q^{\frac{p_0'(p_0-1)}{p'(1-\theta)}}  \\
  &\qquad\times\ave{ w_1 }_Q^{\frac{\theta p_0'(p_0-1)}{p_1(1-\theta)}}\varphi(|Q|)^{\eta\frac{p_0}{p(1-\theta)}+\eta \frac{\theta p_0}{p_1'(1-\theta)}+\eta\frac{p_0'(p_0-1)}{p'(1-\theta)}+\eta\frac{\theta p_0'(p_0-1)}{p_1(1-\theta)}}  \\
  &=\ave{w}_Q^{\frac{p_0(\theta) }{p(1-\theta)}} \ave{w_1^{-\frac{1}{p_1-1}}}_Q^{\frac{\theta p_0(\theta)}{p_1'(1-\theta)}}\ave{w^{-\frac{1}{p-1}} }_Q^{\frac{p_0(\theta)}{p'(1-\theta)}}  \\
  &\qquad\times\ave{ w_1 }_Q^{\frac{\theta p_0(\theta)}{p_1(1-\theta)}}\varphi(|Q|)^{\eta\frac{p_0(\theta)}{p(1-\theta)}+\eta \frac{\theta p_0(\theta)}{p_1'(1-\theta)}+\eta\frac{p_0(\theta)}{p'(1-\theta)}+\eta\frac{\theta p_0(\theta)}{p_1(1-\theta)}} \\
  &=(\ave{w}_Q\ave{w^{-\frac{1}{p-1}} }_Q^{p-1})^{\frac{p_0(\theta) }{p(1-\theta)}}(\ave{ w_1 }_Q\ave{w_1^{-\frac{1}{p_1-1}}}_Q^{p_1-1})^{\frac{\theta p_0(\theta)}{p_1(1-\theta)}}  \\
  &\qquad\times\varphi(|Q|)^{\frac{p_0(\theta)(\eta+\eta\theta)}{1-\theta}} \\
  &\leq [w]_{A_{p}^{\zeta}(\varphi)}^{\frac{p_1}{p_1-\theta p}}[w_1]_{A_{p_1}^{\zeta_1}(\varphi)}^{\frac{\theta p}{p_1-\theta p}}\varphi(|Q|)^{\zeta_0 p_0(\theta)},
\end{split}
\end{equation*}
where $\zeta_0=\frac{\eta+\eta\theta+\zeta+\zeta_1\theta}{1-\theta}>0$. In combination with the lines preceding \eqref{eq:beforeRHI3}, we have shown that
\begin{equation*}
  [w_0]_{A_{p_0}^{\zeta_0}(\varphi)}\lesssim [w]_{A_{p}^{\zeta}(\varphi)}^{\frac{p_1}{p_1-\theta p}} [w_1]_{A_{p_1}^{\zeta_1}(\varphi)}^{\frac{\theta p}{p_1-\theta p}}<\infty,
\end{equation*}
provided that $\theta>0$ is small enough. This concludes the proof. 
\end{proof}

We now have the last missing ingredient of the proof of Theorem \ref{thm:Ap(varphi) compact}:

\begin{proof}[Proof of Proposition \ref{prop:Ap(varphi)}]
We are given $q,q_1\in (1,\infty),\zeta,\zeta_1>0$, and weights $v\in A_q^\zeta(\varphi)$, $v_1\in A_{q_1}^{\zeta_1}(\varphi)$. By Lemma \ref{lem:main3}, there is some $q_0\in (1,\infty),\zeta_0>0$, a weight $v_0\in A_{q_0}^{\zeta_0}(\varphi)$, and $\theta\in(0,1)$ such that
\begin{equation*}
  \frac{1}{q}=\frac{1-\theta}{q_0}+\frac{\theta}{q_1},\qquad
  v^{\frac{1}{q}}=v_0^{\frac{1-\theta}{q_0}}v_1^{\frac{\theta}{q_1}}.
\end{equation*}
By Theorem \ref{thm:SW}, we then have $L^q(v)=[L^{q_0}(v_0),L^{q_1}(v_1)]_\theta$, as we claimed.
\end{proof}

By combining Theorem \ref{thm:CwKa}, Lemma \ref{lem:lemOk} and Proposition \ref{prop:Ap(varphi)} we can prove Theorem \ref{thm:Ap(varphi) compact} as follows:

\begin{proof}[Proof of Theorem \ref{thm:Ap(varphi) compact}]
Recall that the assumptions of Theorem \ref{thm:Ap(varphi) compact} are in force. In particular, $T$ is a bounded linear operator on $L^p(w)$ for all $p\in(1,\infty)$, all $w\in A_p^\zeta(\varphi)$ and all $\zeta>0$. In addition, it is assumed that $T$ is a compact operator on $L^{p_1}(w_1)$ for some $p_1\in(1,\infty)$, some $w_1\in A_{p_1}^{\zeta_1}(\varphi)$ and some $\zeta_1>0$. We need to prove that $T$ is actually compact on $L^p(w)$ for all $p\in(1,\infty)$, all $w\in A_p^\zeta(\varphi)$ and all $\zeta>0$. By Proposition \ref{prop:Ap(varphi)}, we have
\begin{equation*}
  L^p(w)=[L^{p_0}(w_0),L^{p_1}(w_1)]_\theta
\end{equation*}
for some $p_0\in(1,\infty)$, some $\zeta_0>0$, some $w_0\in A_{p_0}^{\zeta_0}(\varphi)$, and some $\theta\in(0,1)$. Writing $X_j=Y_j=L^{p_j}(w_j)$, we know that $T:X_0+X_1\to Y_0+Y_1$, that $T:X_j\to Y_j$ is bounded, and that $T:X_1\to Y_1$ is compact (since the last two assertions were assumed). By Lemma \ref{lem:lemOk}, the last condition \eqref{it:lattice} of Theorem \ref{thm:CwKa} is also satisfied by these spaces $X_j=L^{p_j}(w_j)$. By Theorem \ref{thm:CwKa}, it follows that $T$ is also compact on $[X_0,X_1]_\theta=[Y_0,Y_1]_\theta=L^p(w)$.
\end{proof}

We provide an application of Theorem \ref{thm:Ap(varphi) compact} that concerns pseudo-different\-ial operators with smooth symbols.

\subsection{Commutators of pseudo-differential operators with smooth symbols}

Following \cite{Taylor}, we say that a symbol $\sigma$ belongs to $S_{1,\lambda}^m$ if $\sigma(x,\xi)$ is a smooth function of $(x,\xi)\in\R^d\times\R^d$ and satisfies the following estimate:
\begin{equation*}
  \abs{\partial_x^\mu\partial_\xi^\nu\sigma(x,\xi)}\lesssim(1+\abs{\xi})^{m-\abs{\nu}+\lambda|\mu|},
\end{equation*}
for all $\mu,\nu\in \N^d$, where $m\in\R$.

Let $\sigma(x,\xi)\in S_{1,\lambda}^m$. The pseudo-differential operator $T$ is defined by
\begin{equation*}
  Tf(x)=\int_{\R^d}\sigma(x,\xi)e^{2\pi ix\cdot\xi}\widehat f(\xi)d\xi,
\end{equation*}
where $f$ is a Schwartz function and $\widehat f$ denotes the Fourier transform of $f$. As usual, $L_{1,\lambda}^m$ will denote the class of pseudo-differential operators with symbols in $S_{1,\lambda}^m$.

Miller \cite{Miller} showed the boundedness of $L_{1,0}^0$ pseudo-differential operators on $L^p(w)$ for $1<p<\infty$ and $w\in A_p(\R^d)$. Tang \cite{Tang} improved the results of Miller by showing the boundedness of $L_{1,0}^0$ pseudo-differential operators and their commutators on $L^p(w)$, where $w\in A_p^\zeta(\varphi)$, $\varphi(t)=1+t$ and $\zeta>0$ (Tang also makes a remark about the case $L_{1,\lambda}^0$ ($0<\lambda<1$); see \cite[after Corollary 1.2]{Tang}).

We will apply Theorem \ref{thm:Ap(varphi) compact} to the commutators of pseudo-differential operators $T\in L_{1,0}^0$. We need the following result of Tang \cite{Tang}:

\begin{theorem}[\cite{Tang}, Theorem 1.2]\label{thm:bdd. of comm. of pseudo-diff. oper. with smooth symbols}
Suppose that $T\in L_{1,0}^0$ and let $b\in\BMO(\R^d)$, for $1<p<\infty$. Then $[b,T]$ is bounded on $L^p(w)$ for all $w\in A_p^\zeta(\varphi)$, where $\varphi(t)=1+t$ and $\zeta>0$.
\end{theorem}
 
As explained in Remark \ref{rmk:pseudodiff} these operators are instances of Calder\'on--Zygmund operators and thus they satisfy the assumption of Uchiyama's Theorem \ref{thm:Uchi}. Therefore, we can combine Theorems \ref{thm:Uchi}, \ref{thm:Ap(varphi) compact} and \ref{thm:bdd. of comm. of pseudo-diff. oper. with smooth symbols} in order to recover a very recent result of Guo--Zhou \cite{GZ}:

\begin{theorem}[\cite{GZ}, Theorem 2.1] Suppose that $T\in L_{1,0}^0$ and let $b\in CMO(\R^d)$, for $1<p<\infty$. Then the commutator $[b,T]$ is a compact operator on $L^p(w)$ for all $w\in A_p^\zeta(\varphi)$, where $\varphi(t)=1+t$ and $\zeta>0$.
\end{theorem}

\begin{proof}
We verify the assumptions of Theorem \ref{thm:Ap(varphi) compact} for $[b,T]$ in place of $T$: By Theorem \ref{thm:bdd. of comm. of pseudo-diff. oper. with smooth symbols} $[b,T]$ is a bounded operator on $L^p(w)$ for all $1<p<\infty$, all $w\in A_p^\zeta(\varphi)$ and all $\zeta>0$. By Theorem \ref{thm:Uchi}, $[b,T]$ is a compact operator on $L^{p_1}(\R^d)=L^{p_1}(w_1)$ for any $1<p_1<\infty$ with $w_1\equiv 1\in A_{p_1}^{\zeta_1}(\varphi)$ and any $\zeta_1>0$. Thus Theorem \ref{thm:Ap(varphi) compact} applies to give the compactness of $[b,T]$ on $L^p(w)$ for all $p\in (1,\infty)$, all $w\in A_p^\zeta(\varphi)$ and all $\zeta>0$.
\end{proof}

As in the case of the commutators of fractional integral operators in Section \ref{comm. fr. int. op.} the original proof in \cite{GZ} relied on verifying the weighted Fr\'echet--Kolmogorov criterion \cite{ClopCruz}, which is avoided by the argument above.

\subsection*{Acknowledgements} The second author wishes to thank his doctoral supervisor Prof. Tuomas Hyt\"onen for helpful discussions.

\subsection*{Funding} Both authors were supported by the Academy of Finland through the Grant no. 314829. The second author would like to thank the Foundation for Education and European Culture (Founders Nicos and Lydia Tricha), Greece for their financial support.


\end{document}